\documentclass[11pt,reqno,oneside]{amsart}

\usepackage[asymmetric,top=3.5cm,bottom=4.3cm,left=3.1cm,right=3.1cm]{geometry}
\geometry{a4paper}
\usepackage{amsmath,amsfonts,amsthm,mathrsfs,amssymb,cite}
\usepackage[usenames]{color}

\usepackage{mathtools} 
\usepackage{graphics}
\usepackage{framed}

\usepackage{enumerate}

\usepackage{subfigure}


%
%
\usepackage{graphicx}
\usepackage{latexsym} 
\usepackage{enumerate}

\theoremstyle{plain}
\newtheorem{theorem}{Theorem}[section]

\newtheorem{lemma}{Lemma}[section]

\newtheorem{definition}{Definition}[section]

\newtheorem{remark}{Remark}[section]



%
%

\frenchspacing

\renewcommand{\eqref}[1]{\textnormal{(\ref{#1})}}

\numberwithin{equation}{section}



\newcommand{\R}{\mathbb{R}}

\def\bsi{{\mathrm{i}}}
\def\Oh{{\mathcal O}}
\newcommand{\bl}{\color{black}}

\begin{document}

\title[Uniqueness in determining a piecewise conductive medium body]{Determining a piecewise conductive medium body by a single far-field measurement}

\author{Xinlin Cao}
\address{Department of Mathematics, Hong Kong Baptist University, Kowloon, Hong Kong, China.}
\email{xlcao.math@foxmail.com}

\author{Huaian Diao}
\address{School of Mathematics and Statistics, Northeast Normal University,
Changchun, Jilin 130024, China.}
\email{hadiao@nenu.edu.cn}

\author{Hongyu Liu}
\address{Department of Mathematics, City University of Hong Kong, Kowloon, Hong Kong, China.}
\email{hongyu.liuip@gmail.com, hongyliu@cityu.edu.hk}


\begin{abstract}

We are concerned with the inverse problem of recovering a conductive medium body. The conductive medium body arises in several applications of practical importance, including the modeling of an electromagnetic object coated with a thin layer of a highly conducting material and the magnetotellurics in geophysics. We consider the determination of the material parameters inside the body as well as on the conductive interface by the associated electromagnetic far-field measurement. Under the transverse-magnetic polarisation, we derive two novel unique identifiability results in determining a 2D piecewise conductive medium body associated with a polygonal-nest or a polygonal-cell geometry by a single active or passive far-field measurement. 

\medskip

\noindent{\bf Keywords:}~~electromagnetic scattering, conductive transmission condition, inverse problem, single far-field measurement, polygonal, corner singularity

\noindent{\bf 2010 Mathematics Subject Classification:}~~ 35Q60, 78A46 (primary); 35P25, 78A05, 81U40 (secondary).

\end{abstract}

\maketitle

\section{Introduction}\label{sec1}

\subsection{Physical motivation and mathematical formulation}

We are concerned with the time-harmonic electromagnetic wave scattering from a conductive medium body. The conductive medium body arises in several applications of practical importance, including the modelling of an electromagnetic object coated with a thin layer of a highly conducting material and the magnetotellurics in geophysics. In order to well motivate the current study, we next provide brief discussions on the aforementioned two specific applications and then introduce the mathematical formulation of the associated inverse problems.  

In what follows, the optical properties of a medium are specified the electric permittivity $\varepsilon$, the magnetic permeability $\mu$ and the conductivity $\sigma$. Let $\Omega$ be a bounded Lipschitz domain in $\mathbb{R}^2$ with a connected complement $\mathbb{R}^2\backslash\overline{\Omega}$. Consider a infinitely long cylinder-like medium body $D:=\Omega\times\mathbb{R}$ in $\mathbb{R}^3$ with the cross section being $\Omega$ along the $x_3$-axis for $\mathbf{x}=(x_j)_{j=1}^3\in D$. In what follows, with a bit abuse of notation, we shall also use $\mathbf{x}=(x_1, x_2)$ in the 2D case, which should be clear from the context. Let $\delta\in\mathbb{R}_+$ be sufficiently small and $\Omega_\delta:=\{\mathbf{x}+h\nu(\mathbf{x}); \mathbf{x}\in\partial\Omega\ \mbox{and}\ h\in (0, \delta)\}$, where $\nu\in\mathbb{S}^1$ signifies the exterior unit normal vector to $\partial\Omega$. Set $D_\delta=\Omega_\delta\times\mathbb{R}$ to denote a layer of thickness $\delta$ coated on the medium body $D$. The material configuration associated with the above medium structure is given as follows:
\begin{equation}\label{eq:m1}
\varepsilon, \mu, \sigma=\varepsilon_1, \mu_0, \sigma_1\ \mbox{in}\ \ D;\ \varepsilon_2, \mu_0, \frac{\gamma}{\delta}\ \ \mbox{in}\ D_\delta;\ \varepsilon_0, \mu_0, 0\ \ \mbox{in}\ \mathbb{R}^3\backslash\overline{(D\cup D_\delta)},
\end{equation}
where for simplicity, $\varepsilon_j$, $j=0,1,2$, $\mu_0, \gamma$ are all positive constants and $\sigma_1$ is a nonnegative constant. Consider a time-harmonic incidence: 
\begin{equation}\label{eq:inc1}
\nabla\wedge\mathbf{E}^i-\mathrm{i}\omega\mu_0\mathbf{H}^i=0,\quad \nabla\wedge\mathbf{H}^i+\mathrm{i}\omega\varepsilon_0\mathbf{E}^i=0\quad\mbox{in}\ \mathbb{R}^3,
\end{equation}
where $\mathrm{i}:=\sqrt{-1}$, $\mathbf{E}^i$ and $\mathbf{H}^i$ are respectively the electric and magnetic fields and $\omega\in\mathbb{R}_+$ is the angular frequency. The impingement of the incident field $(\mathbf{E}^i, \mathbf{H}^i)$ on the medium body described in \eqref{eq:m1} generates the electromagnetic scattering, which is governed by the following Maxwell system:
\begin{equation}\label{eq:sca1}
\begin{cases}
& \nabla\wedge\mathbf{E}-\mathrm{i}\omega\mu\mathbf{H}=0,\quad \nabla\wedge\mathbf{H}+\mathrm{i}\omega\varepsilon\mathbf{E}=\sigma\mathbf{E}, \quad\mbox{in}\ \mathbb{R}^3, \\[5pt]
& \mathbf{E}=\mathbf{E}^i+\mathbf{E}^s,\quad\mathbf{H}=\mathbf{H}^i+\mathbf{H}^s,\hspace*{2.55cm}\mbox{in}\ \mathbb{R}^3, \\[5pt]
& \displaystyle{\lim_{r\rightarrow\infty} r\left(\mathbf{H}^s\wedge\hat{\mathbf{x}}-\mathbf{E}^s \right)=0, }\hspace*{3.5cm} r:=|\mathbf{x}|, \hat{\mathbf{x}}:=\mathbf{x}/|\mathbf{x}|,
\end{cases} 
\end{equation}
where as usual one needs to impose the standard transmission conditions, namely the tangential components of the electric field $\mathbf{E}$ and the magnetic field $\mathbf{H}$ are continuous across the material interfaces $\partial D$ and $\partial D_\delta$. The last limit in \eqref{eq:sca1} is known as the Silver-M\"uller radiation condition. 

Under the transverse-magnetic (TM) polarisation, namely,
\begin{align}\notag
\mathbf{E}^i&=\begin{bmatrix}
0\\0\\ u^i(x_1,x_2)
\end{bmatrix},\  \mathbf{H}^i=\begin{bmatrix}
H_1(x_1,x_2)\\
H_2(x_1,x_2)\\
0
\end{bmatrix}, 
\end{align}
and
\begin{align}
\mathbf{E}&=\begin{bmatrix}
0\\0\\ u(x_1,x_2)
\end{bmatrix},\  \mathbf{H}=\begin{bmatrix}
H_1(x_1,x_2)\\
H_2(x_1,x_2)\\
0
\end{bmatrix}, \label{eq:p2}
\end{align}
it is rigorously verified in \cite{BonT} that as $\delta\rightarrow +0$, one has
	\begin{equation}\label{eq:model1}
	\begin{cases}
	\Delta u^-+k^2 q u^-=0 & \mbox{ in }\ \Omega, \\[5pt] 
	\Delta u^+ +k^2  u^+=0 & \mbox{ in }\ \R^2 \backslash \Omega, \\[5pt] 
	u^+= u^-,\quad \partial_\nu u^+ + \lambda u^+=\partial_\nu u^- & \mbox{ on }\ \partial \Omega, \\[5pt]
	u^+=u^i+u^s & \mbox{ in }\ \R^2 \backslash \Omega, \\[5pt] 
	\lim\limits_{r \rightarrow \infty} r^{1/2} \left( \partial_r u^s-\bsi k u^s \right)=0, & \ r=|\mathbf x|,
	\end{cases}
	\end{equation}
where 
\begin{equation}\label{eq:model12}
\mbox{$u^-=u|_{\Omega}$, $u^+=u|_{\mathbb{R}^2\backslash\overline{\Omega}}$\ \ and\ \ $k=\omega\sqrt{\varepsilon_0\mu_0}$, $\lambda=\mathrm{i}\omega\gamma\mu_0$}.
\end{equation}
The last limit in \eqref{eq:model1} is known as the Sommerfeld radiation condition. According to \eqref{eq:m1}, as $\delta\rightarrow+0$, it is clear that the conductivity in the thin layer $D_\delta$ goes to infinity, or equivalently, its resistivity goes to zero. This in general would lead to the so-called perfectly electric conducting (PEC) boundary, which prevents the electric field from penetrating inside the medium body and instead generates a certain boundary current. However, it is noted in our case that the thickness of the coating layer $D_\delta$ also goes to zero, and this allows the electromagnetic waves to penetrate inside the medium body. Nevertheless, the thin highly-conducting layer effectively produces a transmission  boundary condition across the material interface $\partial D$ involving a conductive parameter $\lambda$, which is referred to as the conductive transmission condition. As is known, a perfect conductor is an idealized material which does not exist in nature, and hence the conductive medium body provides a more realistic means to model the electromagnetic scattering from an object coated with a thin layer of a highly conducting material; see \cite{HM, Senior} more relevant discussion about this aspect. 

In fact, the conductive transmission condition has its origin in magnetotellurics and was first introduced by the geophysicists for modelling the physical phenomenon describe above (cf. \cite{Ang}). The electromagnetic induction in magnetotellurics is usually described by the eddy current model as follows (cf. \cite{Amm,Ang}):
\begin{equation}\label{eq:model2}
\begin{cases}
& \nabla\wedge\mathbf{E}=\mathrm{i}\omega\mathbf{B}, \quad \nabla\wedge\mathbf{B}=\mu_0\sigma\mathbf{E}+\mathbf{J}\quad\mbox{in}\ \ \mathbb{R}^3,\\[5pt]
& \mathbf{E}(\mathbf{x})=\mathcal{O}(|\mathbf{x}|^{-1}),\quad \mathbf{B}(\mathbf{x})=\mathcal{O}(|\mathbf{x}|^{-1}), 
\end{cases}
\end{equation}
where $\sigma=\sigma_1\chi_{D}+\sigma_2\chi_{\mathbb{R}^3\backslash\overline{D}}$, and for simplicity both $\sigma_1$ and $\sigma_2$ assumed to be positive constants. In \eqref{eq:model2}, $\mathbf{J}$ signifies a source current, which is assumed to be compactly supported. The following conductive transmission condition across the material interface $\partial D$ was considered in the geophysics literature (cf. \cite{Ang} and the references cited therein):
\begin{equation}\label{eq:model22}
\nu\wedge\mathbf{E}|_+-\nu\wedge\mathbf{E}|_-={\mathbf 0},\quad \nu\wedge \mathbf{B}|_+- \nu\wedge\mathbf{B}|_-=\mu_0\gamma(\nu\wedge\mathbf{E})\wedge \nu  \quad \mbox{on}\ \partial D.
\end{equation}
By introducing
$$
\mathbf{H}(\mathbf x)=\begin{cases}
(\omega /k_1)\mathbf{B}(\mathbf x),\quad \mathbf x \in D,\\[5pt]
	(\omega /k_2)\mathbf{B}(\mathbf x),\quad \mathbf x \in \mathbb{R}^3 \backslash D,
\end{cases}
$$
where $k_j^2=\mathrm{i}\omega\mu_0\sigma_j$ fulfill $\Im k_j>0$, $j=1, 2$, \eqref{eq:model2} and \eqref{eq:model22} can be transformed to
\begin{equation}\label{eq:model3}
	\begin{cases}
	\nabla\wedge\mathbf{E}-\mathrm{i}k_1\mathbf{H}={\mathbf 0},\ \ \nabla\wedge\mathbf{H}+\mathrm{i}k_1\mathbf{E}=(\omega/k_1 )\mathbf{J}  & \mbox{ in }\ D, \\[5pt] 
	\nabla\wedge\mathbf{E}-\mathrm{i}k_2\mathbf{H}={\mathbf 0},\ \ \nabla\wedge\mathbf{H}+\mathrm{i}k_2\mathbf{E}={\mathbf 0} & \mbox{ in }\ \R^3 \backslash D, \\[5pt] 
	\nu\wedge\mathbf{E}|_+-\nu\wedge\mathbf{E}|_-={\mathbf 0}  & \mbox{ on }\ \partial D, \\[5pt]
	\nu\wedge[\nu\wedge(k_2\mathbf{H}|_+-k_1\mathbf{H}|_-)]=\mu_0\gamma\omega\nu\wedge\mathbf{E} & \mbox{ on }\ \partial D, \\[5pt] 
	\mathbf{E}(\mathbf{x})=\mathcal{O}(|\mathbf{x}|^{-1}),\quad \mathbf{H}(\mathbf{x})=\mathcal{O}(|\mathbf{x}|^{-1}), & \mbox{ as } |\mathbf x| \rightarrow +\infty. 
	\end{cases}
	\end{equation}
Next, we consider the transverse-magnetic scattering associated with \eqref{eq:model3} by assuming \eqref{eq:p2} and $\mathbf{J}=[0, 0, \psi(x_1, x_2)]^\top $. By straightforward calculations, one can show that
\begin{equation}\label{eq:model4}
	\begin{cases}
	\Delta u^-+k^2 q  u^-=\widetilde \psi & \mbox{ in }\ \Omega, \\[5pt] 
	\Delta u^+ +k^2 u^+=0 & \mbox{ in }\ \R^2 \backslash \Omega, \\[5pt] 
	u^+= u^-,\quad \partial_\nu u^+ + \lambda u^+=\partial_\nu u^- & \mbox{ on }\ \partial \Omega, \\[5pt]
	u^+(\mathbf{x})=\mathcal{O}(|\mathbf{x}|^{-1}), & \ \mbox{ as } |\mathbf x|\rightarrow +\infty,
	\end{cases}
	\end{equation}
where $u^-=u|_{\Omega}$, $u^+=u|_{\mathbb{R}^2\backslash\overline{\Omega}}$ and
\begin{equation}\label{eq:m2}
\mbox{$k^2=\mathrm{i}\omega\mu_0\sigma_2$ with $\Im k\geq 0$,\ $q=\sigma_1/\sigma_2 $,\ $\widetilde \psi={\mathrm i} \omega \psi $, \ $\lambda={\rm i} \mu_0 \gamma \omega$}.
\end{equation} 

The well-posedness of the wave scattering systemes \eqref{eq:model1} and \eqref{eq:model4} is studied in \cite{BonT,Bon}. There exists a unique solution $u\in H_{loc}^1(\mathbb{R}^2)$ that depends continuously on the input. In particular, the scattered field in \eqref{eq:model1} admits the following asymptotic expansion
\begin{equation}
u^s(\mathbf x)=\frac{e^{\bsi k|\mathbf x|}}{|\mathbf x|^{1/2}}u^{\infty}(\hat{\mathbf x})+\Oh \left(\frac{1}{|\mathbf x|}\right), |\mathbf x|\rightarrow +\infty,
\end{equation}
which holds uniformly in the angular variable $\hat{\mathbf x}=\mathbf x/|\mathbf x| \in {\mathbb S}^{1}$. $u^{\infty}(\hat{\mathbf x})$ is known as the far-field pattern associated with $u^i$, which encodes the information of the scattering object, namely the conductive medium body $(\Omega; q, \lambda)$. By introducing an abstract operator $\mathcal{F}$ via the Helmholtz system \eqref{eq:model1} which sends $(\Omega; q, \lambda)$ to the associated far-field pattern $u^{\infty}(\hat{\mathbf{x}}; u^i)$, the aforementioned inverse problem can be formulated as 
\begin{equation}\label{inverse}
	\mathcal{F}(\Omega; q,\lambda )=u^{\infty}(\hat{\mathbf{x}}; u^i),\quad\hat{\mathbf{x}}\in\mathbb{S}^1. 
\end{equation}
In this paper, we shall mainly consider the case with a single far-field measurement, namely $u^\infty(\hat{\mathbf{x}}; u^i)$, $\hat{\mathbf{x}}\in\mathbb{S}^1$, associated with a single incident wave $u^i$. It is noted that $u^\infty(\hat{\mathbf{x}}; u^i)$ is generated by exerting a known incident wave, and this is known as the active measurement in the inverse scattering theory. There are mainly two motivations for our study with a single far-field measurement. First, the study of inverse scattering problems with a minimal/optimal number of far-field measurements have been a longstanding and intriguing topic in the literature with a colourful history, see e.g. \cite{AR,BL2016,BL2017,CY,Liua,LPRX,LRX,Liu-Zou,Liu-Zou3,Ron2} and the references cited therein, and it would be interesting to investigate whether one can establish similar results in the new setup for the conductive medium body. Second, the inverse problem associated with the scattering model \eqref{eq:model4} can be formulated in a similar manner to \eqref{inverse}:
\begin{equation}\label{inverse2}
\mathcal{F}(\Omega; q, \lambda)=u^\infty(\hat{\mathbf{x}}; \widetilde\psi),\quad\hat{\mathbf{x}}\in\mathbb{S}^1,
\end{equation}
where $u^\infty(\hat{\mathbf{x}}; \widetilde\psi)$ is the associated far-field pattern generated by the (unknown) source $\widetilde\psi$, which is known as the passive measurement in the literature. Clearly, in the latter inverse problem discussed above, it is more practical to consider the case with a single far-field measurement. Finally, we would like to remark that for \eqref{eq:model4}, it would be more practical to consider the so-called near-field measurement (cf. \cite{Amm}). However, in order to unify the exposition, we still such measurement as the far-field pattern, which should be clear in the physical context.

\subsection{Connection to existing results and discussion}

Though in deriving \eqref{eq:model1} and \eqref{eq:model2}, we have assumed that both $q$ and $\lambda$ are constants, it is clear that one has the same formulation for the case with $q\in L^\infty(\Omega)$ and $\lambda\in L^\infty(\partial\Omega)$, being generic variable functions. In such a case, it can be verified that the inverse problems \eqref{inverse} and \eqref{inverse2} are over-determined. In fact, one can count that the cardinality of $u^{\infty}(\hat{\mathbf{x}}; u^i)/u^\infty(\hat{\mathbf{x}}; \widetilde\psi)$ associated with a fixed $u^i/\widetilde\psi$ is $1$, whereas the cardinality of the unknown $(\Omega; q, \lambda)$ is $2$. Here, by the cardinality of a quantity we mean the number of the independent variables in the quantity. Hence, it is unpractical to consider the inverse problem in the generic case. Nevertheless, for the inverse problem \eqref{inverse}, it is proved in \cite{CDL2} that if $\Omega$ is a convex polygon, then $\Omega$ and $\lambda$ can be uniquely determined independent of a generic variable $q$. Compared to our study in \cite{CDL2}, there are two interesting and significant extensions in the current work. First, in addition to the support $\Omega$ and the boundary conductive parameter $\lambda$ of the medium body, we shall further determine the material parameter inside the body, i.e. $q$. Moreover, we develop technical tricks that can handle more general conductive parameter $\lambda$ than that was considered in \cite{CDL2}. Second, we consider a more complicated physical setup than that in \cite{CDL2}. For example, we may allow the presence of multiple conductive interfaces that are embedded in a layered medium, and this shall be more evident in Section 2. 
Our study of the inverse problem \eqref{inverse} is also closely related to \cite{BL2017} by the third author. Roughly speaking, the results derived in \cite{BL2017} are the particular case $\lambda\equiv 0$ of the results derived in the present paper. Hence, in this sense, our study extends the relevant one in \cite{BL2017} with the standard transmission condition across the material interface to the case with a more general conductive transmission condition. However, as shall be seen, the presence of the conductive transmission condition brings several new technical challenges that requires some highly nontrivial treatments. Moreover, according to our earlier discussion, the current study poses some interesting extensions for future investigations, especially those related the full Maxwell system \eqref{eq:sca1} or \eqref{eq:model3} that have a strong background of practical applications. Finally, we give two remarks about the inverse problem \eqref{inverse2}. First, the recovery of a medium body by the associated passive scattering measurement generated by an unknown source have recently been extensively studied in the literature due to its practical significance. In \cite{LU1}, this type of inverse problem was studied in the context of thermoacoustic and photoacoustic tomography, which is an emerging medical imaging modality. Similar inverse problems were also considered in \cite{DLL1,DLL2} associated with the magnetohydrodynamical system and in \cite{DLU} associated with the Maxwell system that are related to geomagnetic anomaly detection and brain imaging, respectively. In two recent articles \cite{LLM1,LLM2}, this type of inverse problem was considered in the quantum scattering governed by a random Schr\"odinger system. According to our earlier discussion, the new inverse problem \eqref{inverse2} may also find potential applications in magnetotellurics, say e.g. in detecting an underwater submarine whose metal surface is obviously thin and highly conducting, and also where the unknown $f$ signifies the ``mysterious" source that produces the geomagnetic field. Second, our analysis in establishing the unique identifiability results for the inverse problems \eqref{inverse} and \eqref{inverse2} is localized around the conductive material interface. The arguments in dealing with the inverse problem \eqref{inverse} can be easily extended to deal with the inverse problem \eqref{inverse2} as long as the support of $\widetilde\psi$ stays a positive distance away from the conductive interface. Hence, in what follows, we shall mainly treat the inverse problem \eqref{inverse} and remark the corresponding extensions to the inverse problem \eqref{inverse2}. 

The rest of the paper is organized as follows. In Section \ref{sec:2}, we present the geometric and mathematical setups of our study. In Section \ref{sect:3}, we state the main unique identifiability results. Section \ref{sect:4}  derives several critical auxiliary lemmas and Section \ref{sec3} is devoted to the proofs of the main theorems. The paper is concluded with some relevant discussion in Section \ref{sect:6}.

\section{Geometric and mathematical setups}\label{sec:2}

We first present two geometric setups for the conductive medium body $\Omega$. They are the so-called polygonal-nest geometry and {polygonal-cell geometry (cf. \cite{BL2017})}. Through out the rest of the paper, we assume that the support of the medium body $\Omega$ is a bounded simply-connected Lipschitz domain with a connected complement $\mathbb{R}^2\backslash\overline{\Omega}$. It is pointed out that the simply-connectedness is not necessary in our subsequent analysis and all the results can be extended to cover the case that $\Omega$ has multiple simply-connected components, but we make this assumption in order to simplify our exposition. 

\begin{definition}\label{def:1}
$\Omega$ is said to have a polygonal-nest partition if there exist $\Sigma_\ell$, $\ell=1, 2, \ldots, N$, $N\in \mathbb{N}$, such that each $\Sigma_\ell$ is an open convex simply-connected polygon and
\begin{equation}\label{eq:def1}
\Sigma_N\Subset\Sigma_{N-1}\Subset\cdots\Subset\Sigma_2\Subset\Sigma_1=\Omega. 
\end{equation}
\end{definition}

Fig.~\ref{fig:c-n} (a) presents a typical polygonal-nest partition of $\Omega$ with three layers.

\begin{definition}\label{def:2}
$\Omega$ is said to have a polygonal-cell partition if there exist $\Sigma_\ell$, $\ell=1, 2, \ldots, N$, $N\in \mathbb{N}$, such that the following conditions are fulfilled:
\begin{enumerate}
\item each $\Sigma_\ell$ is an open simply-connected polygon and $\Sigma_\ell\subset\Omega$, $\Sigma_{\ell}\cap\Sigma_{\ell'}=\emptyset$ if $\ell\neq \ell'$;

\item $\bigcup_{\ell=1}^N\overline{\Sigma_\ell}=\overline{\Omega}$;

\item for each $\Sigma_\ell$, there exists at least one vertex $\mathbf{x}_{c,\ell}$ such that the two edges of $\partial\Sigma_\ell$ associated with $\mathbf{x}_{c,\ell}$, say $\Gamma_\ell^\pm$, satisfy $\Gamma_\ell^\pm\subset\partial\Omega$. 
 \end{enumerate}
\end{definition}

\begin{figure}[htbp]
	\centering
	 \subfigure[Polygonal-nest geometry]{
	\begin{minipage}[t]{0.4\textwidth}
		\centering
		\includegraphics[width=4cm]{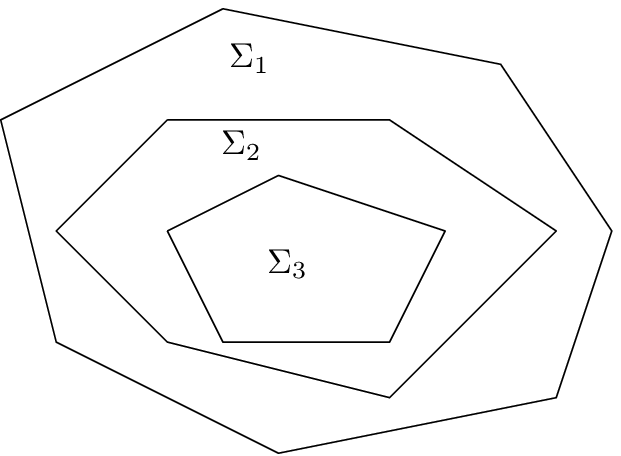}
	\end{minipage}}
	\subfigure[Polygonal-cell geometry]{
	\begin{minipage}[t]{0.4\textwidth}
		\centering
		\includegraphics[width=4cm]{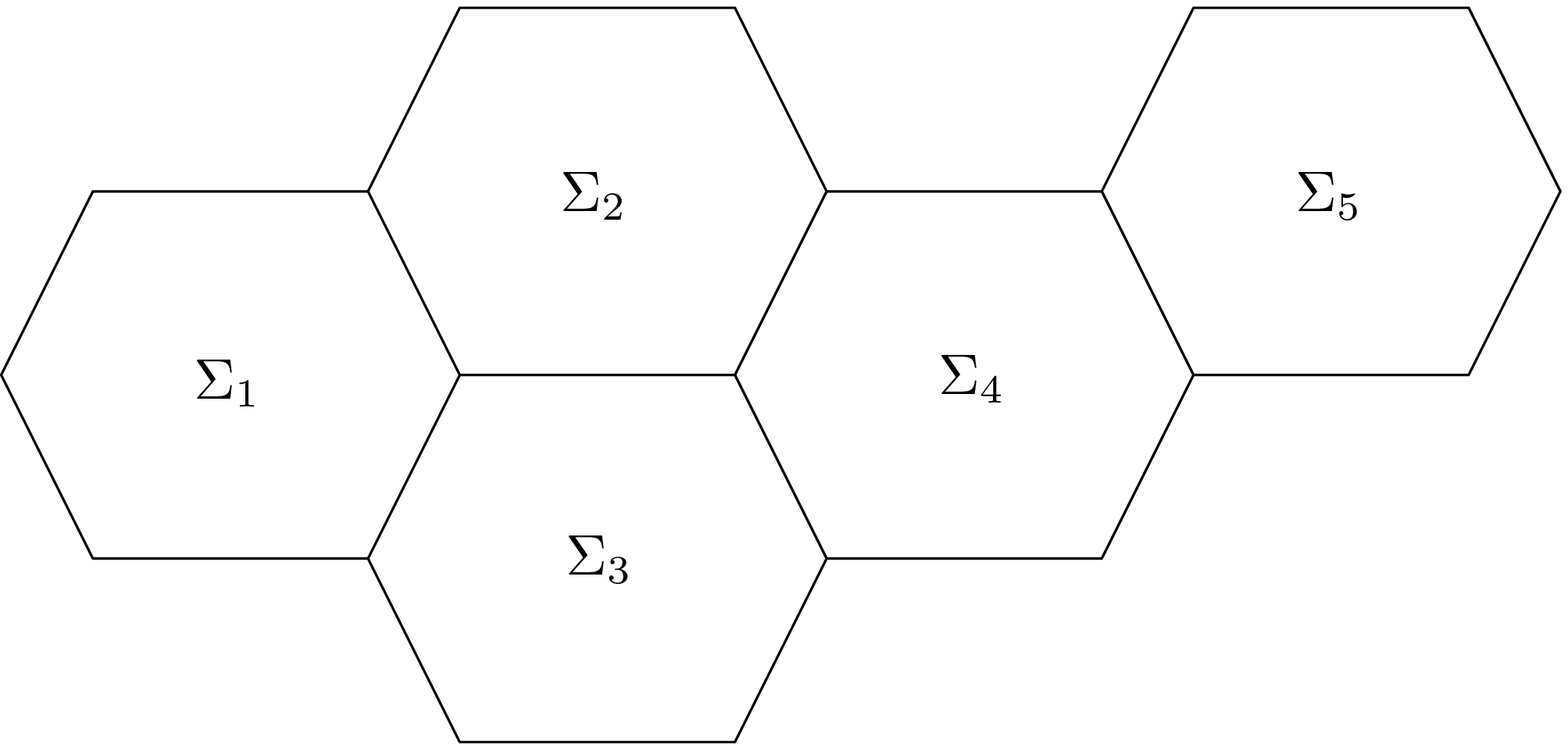}
	\end{minipage}}
  
	\caption{Schematic illustration of the two polygonal geometries in $\R^2$ for a conductive medium body.}
	\label{fig:c-n}
\end{figure}

Fig.~\ref{fig:c-n} (b) presents a typical polygonal-cell partition of $\Omega$ with five hexagonal cells. It is interesting to note that it is the honeycomb graphene structure. We would like to emphasize that for a polygonal-cell partition, each cell is not necessary to be convex. 

\begin{definition}\label{def:3}
Let $(\Omega; q, \lambda)$ be a conductive medium body. It is said to possess a piecewise polygonal-nest structure if the following conditions are fulfilled:
\begin{enumerate}
\item $\Omega$ has a polygonal-nest partition as described in Definition~\ref{def:1};

\item each $\Sigma_\ell$ is a conductive medium body such that {$U_\ell:=\Sigma_\ell\backslash\overline{\Sigma_{\ell+1}}$ possesses material parameters $q_\ell$ and $\lambda_{\ell}$ (on $\partial\Sigma_\ell$), which is denoted as $(U_\ell; q_\ell, \lambda_{\ell})$} for $\ell=1, 2, \ldots, N$ with $\Sigma_{N+1}:=\emptyset$; 

\item for each $(U_\ell; q_\ell, \lambda_{\ell})$, $q_\ell\in \mathbb{C}$ with $\Re q_\ell>0$,  and $\lambda_\ell\in \mathbb{C}$ with $\Re\lambda_\ell\geq 0$ or $\Im\lambda_\ell\geq 0$. 
\end{enumerate}

\end{definition}

For a polygonal-nest conductive medium body $(\Omega; q, \lambda)$ as described in Definition~\ref{def:3}, we write it as
\begin{equation}\label{eq:def3}
(\Omega; q, \lambda)=\bigcup_{\ell=1}^N (U_\ell; q_\ell, \lambda_{\ell})
\end{equation}
and
\begin{equation}\label{eq:def32}
\Omega=\bigcup_{\ell=1}^N U_\ell,\ \ q=\sum_{\ell=1}^N q_\ell \chi_{U_\ell},\ \ \lambda=\sum_{\ell=1}^N \lambda_{\ell} \chi_{\partial\Sigma_\ell}.
\end{equation}
We would like to remark that,  for $\ell\neq \ell'$, $(q_\ell, \lambda_\ell)$ can be different from $(q_{\ell'}, \lambda_{\ell'})$, and $\lambda_\ell$ may be identically zero.

For the wave scattering associated with a a polygonal-nest conductive medium body in \eqref{eq:def3}--\eqref{eq:def32}, it will be described by a PDE system similar to \eqref{eq:model1} or \eqref{eq:model4}, but subject to a proper modification of the conductive transmission conditions in order to accommodate the more general medium structure as follows:
\begin{equation}\label{eq:conductm1}
u_{\ell}|_{\partial\Sigma_{\ell+1}}=u_{\ell+1}|_{\partial\Sigma_{\ell+1}},\quad (\partial_\nu u_{\ell}+\lambda_{{\ell+1}}u_{\ell})|_{\partial\Sigma_{\ell+1}}=\partial_\nu u_{\ell+1}|_{\partial\Sigma_{\ell+1}}, 
\end{equation}
where $u_{\ell}=u|_{U_\ell }$, $\ell=0, 1, 2, \ldots, N-1$ and $\Sigma_0:=\mathbb{R}^2\backslash\overline{\Sigma_1}$. 
The well-posedness of a unique solution $u\in H_{loc}^1(\mathbb{R}^2)$ to the scattering system \eqref{eq:model1} or \eqref{eq:model4} with the conductive transmission condition replaced to be \eqref{eq:conductm1} can be established by following a similar variational argument to that in \cite{Bon}. We shall not explore this issue further since the focus of the present paper is the corresponding inverse problems, i.e., 
\begin{equation}\label{inverse11}
	\mathcal{F}(\bigcup_{\ell=1}^N (U_\ell; q_\ell, \lambda_{\ell}))=u^{\infty}(\hat{\mathbf{x}}; u^i),\quad\hat{\mathbf{x}}\in\mathbb{S}^1. 
\end{equation}
and 
\begin{equation}\label{inverse22}
	\mathcal{F}(\bigcup_{\ell=1}^N (U_\ell; q_\ell, \lambda_{\ell}))=u^{\infty}(\hat{\mathbf{x}}; \widetilde\psi),\quad\hat{\mathbf{x}}\in\mathbb{S}^1. 
\end{equation}

In a similar manner, a piecewise polygonal-cell conductive medium body can be defined as follows. 

\begin{definition}\label{def:4}
Let $(\Omega; q, \lambda)$ be a conductive medium body. It is said to possess a piecewise polygonal-cell structure if the following conditions are fulfilled:
\begin{enumerate}
\item $\Omega$ has a polygonal-cell partition as described in Definition~\ref{def:2};

\item each $\Sigma_\ell$ is a conductive medium body with material parameters $q_\ell$ and $\lambda^*$ and is denoted as $(\Sigma_\ell; q_\ell, \lambda^*)$; 

\item for each $(\Sigma_\ell; q_\ell, \lambda^*)$, $q_\ell\in \mathbb{C}$ with $\Re q_\ell>0$ and $\Im q_\ell\geq 0$, and $\lambda^*\in \mathbb{C}$ with $\Re\lambda^*\geq 0$ or $\Im\lambda^* \geq 0$. 
\end{enumerate}

\end{definition}

Similar to \eqref{eq:def3}--\eqref{eq:def32}, we denote a polygonal-cell conductive medium body $(\Omega; q, \lambda)$ as described in Definition~\ref{def:4} as
\begin{equation}\label{eq:def4}
(\Omega; q, \lambda)=\bigcup_{\ell=1}^N (\Sigma_\ell; q_\ell, \lambda^*)
\end{equation}
and
\begin{equation}\label{eq:def42}
\Omega=\bigcup_{\ell=1}^N \Sigma_\ell,\ \ q=\sum_{\ell=1}^N q_\ell \chi_{\Sigma_\ell},\ \ \lambda=\sum_{\ell=1}^N \lambda^* \chi_{\partial\Sigma_\ell}.
\end{equation}
The direct and inverse problems for a piecewise polygonal-cell conductive medium body can be formulated similar to the case with a piecewise polygonal-nest conductive medium body.

\section{Auxiliary lemmas}\label{sect:3}

In this section, we shall establish several auxiliary lemmas that are critical for our subsequent analysis. We first fix a geometric notation that shall be used through out the rest of the paper. 

 Let $(r,\theta)$ be the polar coordinates in $\R^2$, namely $\mathbf{x}=r(\cos\theta, \sin\theta)$. For $\mathbf{x}\in\R^2$, $B_h(\mathbf{x})$ denotes the open disk of radius $h\in\R_+$ and centred as $\mathbf{x}$. Consider an open sector in $\R^2$ as follows, 
\begin{equation}\label{nota1}
W=\{\mathbf{x}\in\R^2|\mathbf{x}\neq\mathbf{0},\quad \theta_m<\arg(x_1+\bsi x_2)<\theta_M\},
\end{equation}
where $\mathbf{x}=(x_1,x_2)$, $-\pi<\theta_m<\theta_M<\pi$ such that $\theta_M-\theta_m\neq\pi$, $\bsi:=\sqrt{-1}$ and $\Gamma^+$ and $\Gamma^-$ respectively correspond to $(r, \theta_M)$ and $(r, \theta_m)$ with $r>0$. 

Set
\begin{equation}\label{nota2}
	S_h=W\cap B_h,\quad \Gamma_h^\pm=\Gamma^\pm\cap B_h, \quad\overline{S}_h=\overline{W}\cap B_h, \quad \Lambda_h=S_h\cap\partial B_h, 
\end{equation}
where $B_h:=B_h(\mathbf{0})$. In what follows, the sector $S_h$ actually denotes a neighbourhood of a vertex corner of a polygonal conductive medium body. Through out the rest of the paper, we shall always assume that $h\in\mathbb{R}_+$ is sufficiently small such that $S_h$ is completely contained in the concerned conductive medium body, and hence $\Gamma_h^\pm$ lie completely on the two edges associated with the aforesaid vertex, which should be clear from the context. 

We shall also need to make use of the following complex geometric optics (CGO) solution whose logarithm is a branch of the square root stated in the following lemma to prove our main results (cf. \cite{Bsource,CDL2}).
\begin{lemma}\label{lem3-1}\cite[Lemma 2.2]{Bsource}
	For $\mathbf x\in \R^2$ denote $r=|\mathbf x|,\, \theta={\rm arg}(x_1 +\bsi x_2)$. Let
	\begin{equation}\label{eq:u0}
	u_0(\mathbf x):= \exp \left( \sqrt r \left(\cos \left(\frac{\theta}{2}+\pi\right) +\bsi \sin \left(\frac{\theta}{2} +\pi\right)  \right ) \right) .
	\end{equation}
	Then $\Delta u_0=0$ in $\R^2\backslash  \R^2_{0,-} $, where $\R^2_{0,-}:=\{\mathbf{x}\in\R^2|\mathbf{x}=(x_1,x_2); x_1\leq0, x_2=0\}$, and $s \mapsto u_0(s\mathbf x) $ decays exponetially in $\R_+$.  Let $\alpha, s >0$. Then
	\begin{equation}\label{eq:xalpha}
	\int_W |u_0(s\mathbf x)| |\mathbf x|^\alpha {\, d} \mathbf x \leq \frac{2(\theta_M-\theta_m )\Gamma(2\alpha+4) }{ \delta_W^{2\alpha+4}} s^{-\alpha-2},
	\end{equation}
	where $\delta_W=-\max_{ \theta_m < \theta <\theta_M }  \cos(\theta/2+\pi ) >0$. Moreover
	\begin{equation}\label{eq:u0w}
	\int_W u_0(s\mathbf x) {\, d} \mathbf x= 6 \bsi (e^{-2\theta_M \bsi }-e^{-2\theta_m \bsi }  ) s^{-2},
	\end{equation}
	and for $h>0$
	\begin{equation}\label{eq:1.5}
	\int_{W \backslash B_h } |u_0(s\mathbf x)|   {\, d} \mathbf x \leq \frac{6(\theta_M-\theta_m )}{\delta_W^4} s^{-2} e^{-\delta_W \sqrt{hs}/2}. 
	\end{equation}
\end{lemma}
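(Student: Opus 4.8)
The statement to prove is Lemma~\ref{lem3-1}, which is quoted from~\cite{Bsource}. The plan is to verify each of the four assertions by direct computation, treating the explicit CGO function $u_0$ in~\eqref{eq:u0}.

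\textbf{Harmonicity and decay.} First I would write $u_0(\mathbf x)=\exp(\sqrt{z})$ for a suitable branch of the square root, where $z=x_1+\bsi x_2=r\rme^{\bsi\theta}$; the shift by $\pi$ in \eqref{eq:u0} simply selects the branch with $\sqrt{z}=\sqrt r\,\rme^{\bsi(\theta/2+\pi)}$, which is holomorphic on $\R^2\setminus\R^2_{0,-}$. Since $\exp$ and this branch of $\sqrt{\cdot}$ are holomorphic there, $u_0$ is holomorphic, hence harmonic, away from the slit $\R^2_{0,-}$. For the exponential decay of $s\mapsto u_0(s\mathbf x)$: on $W$ we have $\theta\in(\theta_m,\theta_M)$, so $\cos(\theta/2+\pi)\le -\delta_W<0$ by the definition of $\delta_W$, and $|u_0(s\mathbf x)|=\exp(\sqrt{s}\sqrt r\cos(\theta/2+\pi))\le \exp(-\delta_W\sqrt r\,\sqrt s)$, which decays exponentially as $s\to+\infty$.

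\textbf{The integral bounds.} For \eqref{eq:xalpha}, I would pass to polar coordinates: $\int_W|u_0(s\mathbf x)|\,|\mathbf x|^\alpha\,d\mathbf x=\int_{\theta_m}^{\theta_M}\int_0^\infty \rme^{\sqrt{sr}\cos(\theta/2+\pi)}r^{\alpha+1}\,dr\,d\theta$, bound $\cos(\theta/2+\pi)\le-\delta_W$ uniformly in $\theta$, and evaluate the radial integral by the substitution $t=\delta_W\sqrt{sr}$ (so $r=t^2/(\delta_W^2 s)$, $dr=2t\,dt/(\delta_W^2 s)$), which turns it into a Gamma-function integral $\int_0^\infty \rme^{-t}t^{2\alpha+3}\,dt=\Gamma(2\alpha+4)$ up to the stated power of $\delta_W$ and $s$; multiplying by the angular measure $\theta_M-\theta_m$ gives~\eqref{eq:xalpha}. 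For the exact identity~\eqref{eq:u0w}, I would again use polar coordinates but now keep the full $\theta$-dependence: the radial integral $\int_0^\infty \rme^{\sqrt{sr}\,\rme^{\bsi(\theta/2+\pi)}} r\,dr$ can be computed in closed form (substitute $t=\sqrt{r}$, then integrate $\rme^{at}t^3$ against $2t\,dt$, or directly use $\int_0^\infty \rme^{a\sqrt r}r\,dr=12/a^4$ valid for $\Re a<0$), giving $12\,\rme^{-4\bsi(\theta/2+\pi)}/s^2=12\,\rme^{-2\bsi\theta}/s^2$ since $\rme^{-4\pi\bsi}=1$; the angular integral $\int_{\theta_m}^{\theta_M}\rme^{-2\bsi\theta}\,d\theta=\frac{1}{-2\bsi}(\rme^{-2\bsi\theta_M}-\rme^{-2\bsi\theta_m})$ then yields the constant $6\bsi$ and~\eqref{eq:u0w}. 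Finally for~\eqref{eq:1.5}, I would restrict the radial integral to $r>h$, split $\sqrt{sr}=\tfrac12\sqrt{sr}+\tfrac12\sqrt{sr}\ge \tfrac12\sqrt{sh}+\tfrac12\sqrt{sr}$ on that range so that $\rme^{-\delta_W\sqrt{sr}}\le \rme^{-\delta_W\sqrt{sh}/2}\rme^{-\delta_W\sqrt{sr}/2}$, pull out the $h$-dependent factor, and bound the remaining integral $\int_0^\infty \rme^{-\delta_W\sqrt{sr}/2}\,dr$ by the same Gamma-function substitution (with $\alpha=0$ and $\delta_W$ replaced by $\delta_W/2$, using $\Gamma(4)=6$), which produces the prefactor $6(\theta_M-\theta_m)/\delta_W^4\cdot s^{-2}$.

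\textbf{Main obstacle.} None of the steps is conceptually deep — the lemma is a catalogue of explicit estimates — so the only real care needed is bookkeeping: keeping the branch of $\sqrt{z}$ consistent (the $+\pi$ shift is exactly what makes $u_0$ decay rather than blow up on $W$), tracking the exact constants and powers of $\delta_W$, $s$, $\theta_M-\theta_m$ through the substitutions, and in~\eqref{eq:u0w} handling the complex exponent $a=\sqrt s\,\rme^{\bsi(\theta/2+\pi)}$ with $\Re a<0$ so that the formula $\int_0^\infty\rme^{a\sqrt r}r\,dr=12/a^4$ is legitimate. Since the result is already established in~\cite[Lemma 2.2]{Bsource}, in the write-up I would either cite it directly or reproduce the short computation above for completeness.
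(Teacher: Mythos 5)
The paper offers no proof of this lemma at all -- it is quoted verbatim from \cite[Lemma 2.2]{Bsource} -- so your plan of verifying it by direct computation is the natural one, and three of the four items check out. The branch argument for harmonicity, the decay estimate via $\cos(\theta/2+\pi)\le-\delta_W<0$ on $W$, the substitution $t=\delta_W\sqrt{sr}$ giving \eqref{eq:xalpha} with exactly the constant $2(\theta_M-\theta_m)\Gamma(2\alpha+4)\delta_W^{-2\alpha-4}$, and the closed-form radial integral $\int_0^\infty e^{a\sqrt r}r\,dr=12/a^4$ ($\Re a<0$) together with $e^{-4\pi\bsi}=1$ and $\int_{\theta_m}^{\theta_M}e^{-2\bsi\theta}\,d\theta=\tfrac{\bsi}{2}(e^{-2\bsi\theta_M}-e^{-2\bsi\theta_m})$ giving \eqref{eq:u0w} are all correct.

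The gap is in your treatment of \eqref{eq:1.5}. After pulling out $e^{-\delta_W\sqrt{sh}/2}$ you write the remaining integral as $\int_0^\infty e^{-\delta_W\sqrt{sr}/2}\,dr$, dropping the polar Jacobian; with the measure $dr$ this integral is $O(s^{-1})$, not $O(s^{-2})$, so as written the step does not even produce the right power of $s$. If you restore the factor $r$, the substitution you describe (your $\alpha=0$ formula with $\delta_W$ replaced by $\delta_W/2$) gives
\[
\int_0^\infty e^{-\tfrac{\delta_W}{2}\sqrt{sr}}\,r\,dr=\frac{2\,\Gamma(4)}{(\delta_W/2)^4}\,s^{-2}=\frac{192}{\delta_W^4}\,s^{-2},
\]
so your route yields the prefactor $192(\theta_M-\theta_m)\delta_W^{-4}$, not the $6(\theta_M-\theta_m)\delta_W^{-4}$ you assert. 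In fact no rearrangement of this splitting recovers $6$: writing $T=\delta_W\sqrt{hs}$, the exact radial tail is $\tfrac{2}{\delta_W^4 s^2}\int_T^\infty e^{-t}t^3\,dt=\tfrac{2}{\delta_W^4 s^2}e^{-T}(T^3+3T^2+6T+6)$, and $e^{-T/2}(T^3+3T^2+6T+6)$ equals $6$ already at $T=0$ and exceeds $19$ near $T\approx4.6$, so the best constant obtainable from the bound $\cos(\theta/2+\pi)\le-\delta_W$ is roughly $39$, and the inequality with constant $6$ actually fails for small $hs$. You should therefore either state \eqref{eq:1.5} with an explicit larger constant (e.g.\ $192$) or simply as $C(\theta_M-\theta_m)\delta_W^{-4}s^{-2}e^{-\delta_W\sqrt{hs}/2}$; this is harmless for the paper, since \eqref{eq:1.5} is only invoked in the limit $s\to\infty$ (in \eqref{sim2}--\eqref{sim3}), where only the order $s^{-2}e^{-c\sqrt s}$ matters, but as a proof of the lemma as literally stated the step fails.
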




The following lemma is of critical importance for our subsequent analysis of the inverse problems \eqref{inverse11} and \eqref{inverse22}. 

 \begin{lemma}
 	\label{Th:1.2}
Let $S_h$ be defined in \eqref{nota2}. Suppose that $v \in H^2(S_h )$ and $w  \in H^1(S_h) $ satisfy the following PDE system, 
	\begin{align}\label{eq:in eig}
\left\{
\begin{array}{l}
\Delta w+k^2q w=0 \hspace*{1.6cm} \mbox{ in } S_h, \\[5pt] 
\Delta v+ k^2  v=0\hspace*{1.85cm}\ \mbox{ in } S_h \\[5pt] 
w= v,\ \partial_\nu v + \lambda v=\partial_\nu w \ \ \mbox{ on } \Gamma_h^\pm ,
  \end{array}
\right.
\end{align}
where  $\nu\in\mathbb{S}^{1}$ signifies the exterior unit normal vector to $\Gamma_h^\pm $, $k \in \mathbb{C}\backslash\{0\} $, $q\in \mathbb{C}\backslash\{0\} $ and $\lambda(\mathbf x) \in C^{\alpha}(\overline{\Gamma_h^\pm } )$.	It is assumed that either $\lambda\equiv 0$ or $\lambda ({\mathbf 0}) \neq 0$. 
	  Under the following assumptions: 
	\begin{itemize}
	\item[(a)] the function $w$ satisfies a H\"older condition for $\alpha\in (0, 1)$:
	\begin{equation}\label{eq:th21 cond}
			 w  \in C^\alpha(\overline {{S}_h }) ,
	\end{equation}
\item[(b)]  the angles $\theta_m$ and $\theta_M$ of $S_h$ satisfy
	\begin{equation}\label{eq:ass31}
-\pi < \theta_m < \theta_M < \pi \mbox{ and } \theta_M-\theta_m \neq \pi,
\end{equation}
	\end{itemize}
	one has that
	\begin{equation}\label{eq:vn1}
	v({\mathbf 0}) =w({\mathbf 0})=0. 
	\end{equation}
 \end{lemma}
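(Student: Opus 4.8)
The plan is to use the CGO solution $u_0$ from Lemma~\ref{lem3-1} as a test function and extract the value $v(\mathbf 0)=w(\mathbf 0)$ from a leading-order asymptotic analysis as the scaling parameter $s\to+\infty$. First I would pick the sector $W$ of Lemma~\ref{lem3-1} to coincide (after a rigid motion) with the corner $S_h$, so that $u_0$ is harmonic on $S_h\setminus\{x_1\le 0, x_2=0\}$ and decays exponentially along rays. The starting point is Green's identity on $S_h$ applied to the pair $(w,u_0(s\cdot))$ and to the pair $(v,u_0(s\cdot))$. Subtracting the two and using the transmission conditions $w=v$, $\partial_\nu w=\partial_\nu v+\lambda v$ on $\Gamma_h^\pm$, the boundary terms on $\Gamma_h^\pm$ collapse to a single integral $\int_{\Gamma_h^+\cup\Gamma_h^-}\lambda v\,u_0(s\mathbf x)\,\mathrm d\sigma$, while the interior terms produce $k^2(q-1)\int_{S_h} w\, u_0(s\mathbf x)\,\mathrm d\mathbf x$ together with boundary contributions on the arc $\Lambda_h$. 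The arc contributions are exponentially small in $s$ by the exponential decay of $u_0(s\mathbf x)$ for $|\mathbf x|=h$ (this is exactly the estimate \eqref{eq:1.5}-type bound).

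Next I would insert the Hölder decompositions $w(\mathbf x)=w(\mathbf 0)+\delta w(\mathbf x)$ with $|\delta w(\mathbf x)|\lesssim |\mathbf x|^\alpha$, and similarly $\lambda(\mathbf x)=\lambda(\mathbf 0)+\delta\lambda(\mathbf x)$, $v(\mathbf x)=v(\mathbf 0)+\delta v(\mathbf x)$ on $\Gamma_h^\pm$ (note $v(\mathbf 0)=w(\mathbf 0)$ by continuity of the trace). Using \eqref{eq:u0w} the leading interior term becomes $k^2(q-1)w(\mathbf 0)\int_W u_0(s\mathbf x)\,\mathrm d\mathbf x = C_1 w(\mathbf 0)s^{-2}(1+o(1))$ for an explicit nonzero constant $C_1$ depending on the opening angles, while \eqref{eq:xalpha} controls the remainder $\int_{S_h}\delta w\,u_0(s\mathbf x)\,\mathrm d\mathbf x = O(s^{-2-\alpha})$. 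For the boundary term, a one-dimensional computation along each ray $\Gamma^\pm$ shows $\int_{\Gamma_h^\pm}u_0(s\mathbf x)\,\mathrm d\sigma = c^\pm s^{-2}(1+o(1))$ — indeed on a ray $u_0(s\mathbf x)=\exp(\sqrt{sr}\,(\cos(\theta_\ast/2+\pi)+\bsi\sin(\cdot)))$ is, up to the fixed angular phase, $\exp(-\delta^\pm\sqrt{sr})$ with $\delta^\pm=-\cos(\theta^\pm/2+\pi)>0$, and $\int_0^h e^{-\delta\sqrt{sr}}\,\mathrm dr = 2\delta^{-2}s^{-1}\cdot s^{-1}(1+o(1))$ after the substitution $t=\sqrt{sr}$; the Hölder remainder in $\lambda$ and $v$ again gives an $o(s^{-2})$ contribution. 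Collecting everything and multiplying by $s^{2}$, the identity forces
\begin{equation*}
k^2(q-1)C_1\,w(\mathbf 0) + \big(c^+e^{\bsi\varphi^+}+c^-e^{\bsi\varphi^-}\big)\lambda(\mathbf 0)\,v(\mathbf 0) = 0,
\end{equation*}
with $w(\mathbf 0)=v(\mathbf 0)$. So $w(\mathbf 0)\big(k^2(q-1)C_1 + (\text{angular constant})\,\lambda(\mathbf 0)\big)=0$.

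The final step is to rule out the vanishing of the bracket. When $\lambda\equiv 0$ this needs $q\neq 1$, which is where one invokes $\Re q>0$ together with $k\in\mathbb C\setminus\{0\}$ and a separate argument (or an additional non-degeneracy hypothesis implicit in the context — in the model $q$ is genuinely different from the exterior value); more robustly, one repeats the argument with a second CGO solution $u_0$ built on a \emph{different} admissible sub-sector of $S_h$ (shrinking $\theta_M$ or enlarging $\theta_m$), which changes $C_1$ and the angular constant but not $w(\mathbf 0)$, producing a $2\times2$ homogeneous system for $(k^2(q-1)w(\mathbf 0),\ \lambda(\mathbf 0)v(\mathbf 0))$; showing the determinant is nonzero (a computation with the explicit exponentials $e^{-2\theta_M\bsi}-e^{-2\theta_m\bsi}$ from \eqref{eq:u0w}, using $\theta_M-\theta_m\neq\pi$) then forces $w(\mathbf 0)=0$ directly. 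I expect the main obstacle to be precisely this non-degeneracy bookkeeping: making the two (or more) choices of test sector genuinely independent and handling the borderline cases where $\lambda(\mathbf 0)\neq 0$ and $q-1$ conspire — the Hölder remainders and arc terms are routine given Lemma~\ref{lem3-1}, but extracting a clean contradiction for the full range $\Re\lambda\ge 0$ \emph{or} $\Im\lambda\ge 0$ will require carefully tracking the signs/arguments of all constants. A secondary technical point is justifying Green's identity for $v\in H^2$, $w\in H^1$ against $u_0(s\cdot)$, which has only a mild singularity at the origin cut out along the negative axis; this is handled by approximating on $S_h\setminus B_\epsilon(\mathbf 0)$ and letting $\epsilon\to 0$, using that $u_0(s\cdot)\in H^1$ near $\mathbf 0$.
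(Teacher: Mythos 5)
Your overall strategy (Green's identity against the CGO solution $u_0(s\cdot)$, H\"older expansions at the vertex, and extraction of the leading order as $s\to\infty$) is exactly the argument the paper relies on -- the paper itself only cites \cite{BL2017} and \cite{CDL2} here, but the same computation is carried out explicitly in its proof of Lemma \ref{prop1}. However, your proposal contains a concrete computational error that derails the endgame. The ray integral $\int_{\Gamma_h^\pm}u_0(s\mathbf x)\,d\sigma=\int_0^h e^{-\mu(\theta^\pm)\sqrt{sr}}\,dr$ is of order $s^{-1}$, not $s^{-2}$: substituting $t=\sqrt{sr}$ gives $\tfrac{2}{s}\int_0^{\sqrt{sh}}te^{-\mu t}\,dt\to 2\mu^{-2}s^{-1}$ (compare \eqref{I311} in the paper), whereas the area integral $\int_W u_0(s\mathbf x)\,d\mathbf x$ is of order $s^{-2}$ by \eqref{eq:u0w}. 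Consequently the boundary term carrying $\lambda(\mathbf 0)v(\mathbf 0)$ and the interior term carrying $k^2(q-1)w(\mathbf 0)$ do \emph{not} sit at the same order, and your combined identity $k^2(q-1)C_1w(\mathbf 0)+(\cdots)\lambda(\mathbf 0)v(\mathbf 0)=0$ is not what the asymptotics produce. The correct hierarchy actually makes the proof easier and explains the dichotomy in the hypothesis: multiplying by $s$ and letting $s\to\infty$ kills everything except the $\Gamma_h^\pm$ contribution and yields $\lambda(\mathbf 0)v(\mathbf 0)\left(\mu(\theta_M)^{-2}+\mu(\theta_m)^{-2}\right)=0$ with $\mu(\theta)^{-2}=e^{-\bsi\theta}$, and the bracket is nonzero precisely because $\theta_M-\theta_m\neq\pi$; so $\lambda(\mathbf 0)\neq 0$ forces $v(\mathbf 0)=0$ outright, with no interference from $q$. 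Only when $\lambda\equiv 0$ does one pass to the next order, multiply by $s^2$, and obtain $k^2(q-1)w(\mathbf 0)\left(e^{-2\theta_M\bsi}-e^{-2\theta_m\bsi}\right)=0$.

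Because of this, the ``two test sectors / nonvanishing determinant'' device you introduce to rule out a conspiracy between $\lambda(\mathbf 0)$ and $q-1$ is solving a problem that does not exist, and it is built on the wrong identity, so it cannot be salvaged as written. Two smaller points: (i) the interior integrand is $k^2(v-qw)u_0$, which equals $k^2(1-q)w(\mathbf 0)u_0$ plus H\"older remainders only after also using $v-w=\delta v-\delta w=O(|\mathbf x|^{\min(\alpha,\beta)})$ (with $\beta$ from the Sobolev embedding $H^2\hookrightarrow C^{0,\beta}$), so you should state that step; (ii) you are right that the case $\lambda\equiv 0$, $q=1$ is genuinely degenerate -- there the transmission conditions give $v=w$ and nothing forces $v(\mathbf 0)=0$ -- but this reflects an imprecision in the lemma's hypotheses (the cited result in \cite{BL2017} assumes a nonvanishing contrast at the corner) rather than a gap you are expected to close.
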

 
 \begin{proof}
 
 For the case with $\lambda\equiv 0$, the proof of the lemma can be found in \cite{BL2017}, and for the case with $\lambda ({\mathbf 0}) \neq 0$, the lemma can be derived by using a similar argument for proving \cite[Theorem 2.2]{CDL2}.  
 \end{proof}
 
 \begin{remark}
 It is emphasized that in both \cite{BL2017} and \cite{CDL2}, $k$ is assumed to be a positive constant. However, the arguments developed therein for deriving the vanishing behaviour \eqref{eq:vn1} also work for the general case with $k\in\mathbb{C}\backslash\{0\}$. We consider such a general $k$ such that for our subsequent analysis can also accommodate the inverse problem \eqref{inverse22} associate with \eqref{eq:model4}, where $k$ is obviously a nonzero complex constant. 
  \end{remark}
 
 \begin{remark}
 Lemma~\ref{Th:1.2} actually summarizes the existing results on the vanishing properties of the transmission eigenfunctions in a rather general setting (cf. \cite{Bsource,BLLW,BL2017,BL2017b,CDL2}), which may be useful for other studies in different contexts. 
 \end{remark}

Lemma~\ref{Th:1.2} is mainly used in proving the determination of the geometric shape of a conductive medium body. The technical requirement $w\in C^\alpha(\overline S_h)$ can be easily fulfilled in the corresponding proof where an contradiction argument is used (see Theorems \ref{Th:3.1} in what follows for the relevant details). However, in order to determine the material parameter inside the medium body, we shall need the H\"older-regularity of the solution $u$ to \eqref{eq:model1} near the vertex corner of the conductive medium body. This can be derived by using the classical results on the singular behaviours of solutions to elliptic PDEs in a corner domain \cite{Dauge88,Grisvard,Cos,CN}. In fact, it is known that the solution can be decomposed into a singular part and a regular part, where the singular part is of a H\"older form that depends on the corner geometry as well as the boundary and right-hand inputs. For our subsequent use, we have the following result in a relatively simple scenario.

\begin{lemma}\label{lem:22}
Suppose that $u \in H^1(S_h)$ is a solution to 
\begin{equation}\label{eq:lem22}
	\left\{\begin{aligned} \Delta u +\omega^{2} u &=f \quad \text { in } S_h, \\  u &=g \quad \text { on } \Lambda_{h}, \\ u &=0 \quad \text { on } \Gamma_h^\pm, \end{aligned}\right.
\end{equation}
where $f\in L^2(S_h)$ is a given source term, $g\in C^{\infty}(\overline{\Lambda_{h}})$ and  $\omega \in \mathbb C $ is a constant. Then there exists the following decomposition 
$$
u=\tilde{u}_{R}+\tilde{c}_{\omega}(f) \chi(r) r^{\alpha} \sin (\alpha \theta),\quad 0<\alpha<1,
$$
where $\tilde{u}_R\in H^2(S_h)$ is the regular part, $\tilde{c}_{\omega}(f) \chi(r) r^{\alpha} \sin (\alpha \theta) \in H^{1+\alpha -\epsilon }(S_h)$ ($\epsilon>0$) represents the singularity of the solution, $\tilde{c}_{\omega}(f)$ is a constant depending on the data of the problem, and $\chi(r)$ is a $C^\infty$ cutoff function that equals 1 in a neighborhood of the origin and 0 close to $\Lambda_h$. 
\end{lemma}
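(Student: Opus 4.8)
The plan is to reduce the problem to a standard corner-singularity decomposition result for the Dirichlet Laplacian on a plane sector, available in \cite{Dauge88,Grisvard,Cos,CN}, and then to explain why the constant in front of the leading singular function inherits the linear dependence on $f$ recorded in the notation $\tilde c_\omega(f)$. First I would absorb the lower-order term: rewrite \eqref{eq:lem22} as $\Delta u = f-\omega^2 u =: F$, where, because $u\in H^1(S_h)\hookrightarrow L^2(S_h)$, we have $F\in L^2(S_h)$. Similarly I would reduce to homogeneous boundary data: since $g\in C^\infty(\overline{\Lambda_h})$, one may subtract a smooth extension of $g$ supported away from the corner (multiply by a cutoff that vanishes near $\mathbf 0$) so that the corrected function solves a Poisson problem with $L^2$ right-hand side, homogeneous Dirichlet data on all of $\partial S_h$, and the correction is smooth near the vertex; this changes $F$ only by a smooth, hence $L^2$, contribution.

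Next I would invoke the classical regularity theory for the Laplacian on a plane sector of opening $\beta:=\theta_M-\theta_m$ with Dirichlet conditions. The singular exponents are $k\pi/\beta$, $k\in\mathbb N$, and for an $L^2$ right-hand side only the exponents lying strictly between $0$ and $1$ contribute a genuine singularity; under assumptions \eqref{eq:ass31}, i.e. $\beta\in(0,2\pi)\setminus\{\pi\}$, the sole such exponent is $\alpha:=\pi/\beta\in(0,1)\setminus\{\text{nothing}\}$ (and when $\beta<\pi$ there is no singular exponent below $1$, in which case $\tilde c_\omega(f)=0$ and $u\in H^2$ outright — this degenerate case is consistent with the stated form). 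Hence the decomposition
$$
u = \tilde u_R + \tilde c_\omega(f)\,\chi(r)\,r^{\alpha}\sin(\alpha(\theta-\theta_m)),
$$
with $\tilde u_R\in H^2(S_h)$, follows from the corner-decomposition theorem; here $\chi$ is the $C^\infty$ cutoff equal to $1$ near the origin and vanishing near $\Lambda_h$, introduced precisely so that $\chi(r)r^\alpha\sin(\alpha\theta)$ meets the homogeneous Dirichlet condition on $\Gamma_h^\pm$ and does not interfere with the data on $\Lambda_h$. The Sobolev membership $\chi(r)r^\alpha\sin(\alpha\theta)\in H^{1+\alpha-\epsilon}(S_h)$ for every $\epsilon>0$ is an elementary computation in polar coordinates, using that $r^{\alpha-1}\in L^2$ near $\mathbf 0$ and $r^{\alpha-1-s}\notin L^2$ for $s\ge\alpha$.

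It remains to justify that the coefficient depends on $f$ in the manner indicated, namely $f\mapsto\tilde c_\omega(f)$ is a (well-defined, bounded) linear functional — this is the only point requiring a word of care. In the classical theory, the coefficient of a singular function $S_\alpha$ is obtained by a duality/Green's formula pairing of the right-hand side against the \emph{dual singular function} $S_{-\alpha}$ (the corresponding solution with the negative exponent), i.e. $\tilde c_\omega(f)$ equals, up to an explicit normalising constant, $\int_{S_h}(f-\omega^2 u)\,\overline{S_{-\alpha}}\,d\mathbf x$ plus a boundary term coming from $g$; since $S_{-\alpha}\in L^2_{\mathrm{loc}}$ away from the corner and its singularity $r^{-\alpha}$ is integrable against the $H^1$ function $u$, this integral makes sense. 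Because $u$ itself depends linearly and continuously on $(f,g)$ by well-posedness of \eqref{eq:lem22}, the composite map $f\mapsto\tilde c_\omega(f)$ is linear, which is exactly what the later arguments (where $f$ ranges over a family of CGO-type test functions) will exploit.

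The main obstacle, and the only non-bookkeeping step, is handling the zeroth-order perturbation $\omega^2 u$ consistently: one must make sure that after moving it to the right-hand side the decomposition is not circular, i.e. that $\tilde c_\omega(f)$ is genuinely a functional of the \emph{prescribed} datum $f$ and not merely of the unknown combination $f-\omega^2u$. This is resolved by the standard bootstrap: apply the pure-Laplacian decomposition to get $u\in H^{1+\alpha-\epsilon}$, feed this improved regularity back into $\omega^2u$, and use the continuity of the coefficient-extraction functional together with the a priori bound $\|u\|_{H^1(S_h)}\lesssim\|f\|_{L^2(S_h)}+\|g\|_{C^\infty(\overline{\Lambda_h})}$ to re-express $\tilde c_\omega(f)$ solely in terms of $(f,g)$, absorbing the $\omega^2u$ part into the definition of the constant $\tilde c_\omega$. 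I would also remark that $\tilde u_R\in H^2$ requires no compatibility conditions here precisely because $\alpha<1$ forbids the exponent $1$ (which would produce a logarithmic term) under \eqref{eq:ass31}, so the statement is clean.
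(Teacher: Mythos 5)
Your argument is correct in outline, but it is worth noting that the paper does not actually prove this lemma: it simply defers to \cite[Theorem 3.2]{CN} (and the general corner-singularity literature \cite{Dauge88,Grisvard,Cos}), so your write-up is in effect a reconstruction of the proof of the cited result rather than an alternative to anything in the paper. Your reconstruction follows the standard route and the key points are all in place: absorbing $\omega^2 u$ into an $L^2$ right-hand side, lifting the smooth datum $g$ by a cutoff supported away from the vertex, identifying $\alpha=\pi/(\theta_M-\theta_m)$ as the only Dirichlet exponent in $(0,1)$ (with the degenerate convex case $\theta_M-\theta_m<\pi$ giving $\tilde c_\omega(f)=0$ and $u\in H^2$, which is indeed consistent with the statement), the elementary polar-coordinate computation for $\chi(r)r^\alpha\sin(\alpha\theta)\in H^{1+\alpha-\epsilon}$, and the extraction of the coefficient by pairing against the dual singular function $r^{-\alpha}\sin(\alpha\theta)$, which is in $L^2$ precisely because $\alpha<1$. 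Your worry about circularity is somewhat overstated: since $u$ is the given solution, $F=f-\omega^2u$ is determined by it, and the coefficient is simply a functional of the data through $u$; no genuine bootstrap is needed for the decomposition to hold, only for quantitative bounds on $\tilde c_\omega(f)$ (which the paper never uses --- in the subsequent Lemma 3.4 only the qualitative $C^\alpha$ consequence near the vertex matters). One small point you gloss over is the two outer corners where $\Gamma_h^\pm$ meet $\Lambda_h$: global $H^2$ regularity of $\tilde u_R$ there relies on the opening being $\pi/2$ (exponent $2>1$) together with the zeroth-order compatibility of $g$ with the homogeneous data on $\Gamma_h^\pm$; this is harmless for the application, which only uses the behaviour near the origin, but a careful version of the lemma should either assume it or restrict the conclusion to a neighbourhood of the vertex.
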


For a convenient reference of the Lemma~\ref{lem:22}, we refer to \cite[Theorem 3.2]{CN}, though it can be found in a more general context in the aforementioned literature. In our case with \eqref{eq:model1} or \eqref{eq:model4}, from the standard PDE theory (see e.g. \cite{McLean}), we know that the solution $u$ is real-analytic away from the conductive interface. Using Lemma~\ref{lem:22}, we can further establish the H\"older-regularity of the solution up to the conductive interface, especially to the vertex corner point. 

\begin{lemma} \label{lem23}
Suppose that $u\in H^1(B_{2h})$ satisfies 
	\begin{equation}\label{eq:lem23}
\begin{cases}
\Delta u^-+k^2 q_- u^-=0 & \mbox{ in }\ S_{2h}, \\[5pt] 
\Delta u^+ +k^2  q_{+} u^+=0 & \mbox{ in }\ B_{2h} \backslash {\overline{ S_{2h}}}, \\[5pt] 
u^+= u^- & \mbox{ on }\ \Gamma_{2h}^\pm, 
\end{cases}
\end{equation}
where $u^+=u|_{{B_{2h}\backslash {\overline{ S_{2h}}}}}$, $u^-=u|_{S_{2h}}$ and $q_{\pm}$, $k$ are complex constants. Assume that $u^+$ and $u^-$ are respectively real analytic in ${B_{2h}\backslash {\overline{ S_{2h}}}}$ and $S_{2h}$. There exists $\alpha\in (0, 1)$ such that $u^- \in C^\alpha(\overline{S_h})$. 
\end{lemma}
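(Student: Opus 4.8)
The plan is to localise the problem near the vertex $\mathbf 0$ and then to read off the H\"older exponent from the corner singularity decomposition supplied by Lemma~\ref{lem:22}. Away from $\mathbf 0$ the assertion is essentially immediate: by the assumed real-analyticity of $u^{+}$ and $u^{-}$ in their respective regions together with the matching $u^{+}=u^{-}$ on $\Gamma_{2h}^{\pm}$, elliptic (transmission) regularity across the analytic part $\Gamma_{2h}^{\pm}\setminus\{\mathbf 0\}$ of the interface shows that $u^{-}$ is real-analytic up to $\Gamma_{2h}^{\pm}\setminus\{\mathbf 0\}$; in particular $u^{-}$ is smooth, hence H\"older, on $\overline{S_{h}}\setminus B_{\rho}(\mathbf 0)$ for any fixed $\rho>0$, and the Dirichlet trace $g:=u^{-}|_{\Gamma_{2h}^{\pm}}$ is real-analytic on each open edge. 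It therefore suffices to bound $u^{-}$ in $C^{\alpha}$ on a small neighbourhood of the vertex.

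Next I would choose a cut-off $\chi\in C_{0}^{\infty}(B_{2h})$ equal to $1$ near $\mathbf 0$ and supported in the region where $u^{-}$ is already known to be smooth up to the edges, subtract a fixed smooth function carrying the (smooth) edge data $g$ near $\mathbf 0$, and thereby reduce to an unknown $\tilde{u}$ satisfying
\begin{equation*}
\Delta\tilde{u}+\omega^{2}\tilde{u}=f\ \text{ in }\ S_{h},\qquad \tilde{u}=\tilde{g}\ \text{ on }\ \Lambda_{h},\qquad \tilde{u}=0\ \text{ on }\ \Gamma_{h}^{\pm},
\end{equation*}
with $\omega^{2}=k^{2}q_{-}$, $\tilde{g}\in C^{\infty}(\overline{\Lambda_{h}})$, and $f\in L^{2}(S_{h})$ coming from the commutator $[\Delta,\chi]u^{-}$ together with the lower-order and inhomogeneous-data terms; here the hypothesis $u^{-}\in H^{1}$ is precisely what makes $f\in L^{2}$. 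Lemma~\ref{lem:22} then yields $\tilde{u}=\tilde{u}_{R}+\tilde{c}_{\omega}(f)\,\chi(r)\,r^{\alpha}\sin(\alpha\theta)$ with $0<\alpha<1$ and $\tilde{u}_{R}\in H^{2}(S_{h})$. Since in two dimensions $H^{2}(S_{h})\hookrightarrow C^{0,\beta}(\overline{S_{h}})$ for every $\beta\in(0,1)$, the regular part is H\"older, while $r^{\alpha}\sin(\alpha\theta)\in C^{0,\alpha}(\overline{S_{h}})$ by inspection; undoing the cut-off and the subtraction gives $u^{-}\in C^{\alpha}(\overline{S_{h}})$. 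The relevant exponent is, up to an arbitrarily small loss, $\alpha\simeq\min\{\pi/(\theta_{M}-\theta_{m}),1\}$, and the standing hypotheses $-\pi<\theta_{m}<\theta_{M}<\pi$ and $\theta_{M}-\theta_{m}\neq\pi$ keep us away from the resonant value $\alpha=1$ at which a logarithmic factor could otherwise appear.

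The step I expect to be the main obstacle is the reduction to the exact hypotheses of Lemma~\ref{lem:22}, and more specifically the fact that $u^{-}$ has a finite, well-defined boundary value at the corner point $\mathbf 0$, so that a smooth edge-data function can legitimately be subtracted near $\mathbf 0$ --- this is genuinely not free, since in $2$D membership in $H^{1}$ alone does not even force continuity up to a boundary point. The way I would secure it is to appeal directly to the Kondrat'ev--Grisvard corner expansion for $u^{-}$ (equivalently, for the transmission problem on the smooth domain $B_{2h}$ whose interface carries a single corner at $\mathbf 0$): the expansion is governed by the operator pencil of the sector with Dirichlet conditions on the two straight edges, whose exponents are all nonzero, and belonging to $H^{1}$ forces the coefficients of every mode with non-positive real part, and of any logarithmic mode, to vanish, leaving only $r^{\mu_{j}}$-type terms with $\Re\mu_{j}>0$ plus an $H^{2}$ remainder. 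This is exactly the content packaged in Lemma~\ref{lem:22}; it both legitimises the localisation and supplies the H\"older exponent, which completes the argument.
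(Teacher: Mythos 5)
Your overall skeleton is the right one --- reduce to the sector Dirichlet problem of Lemma~\ref{lem:22} by subtracting something that carries the edge data --- and you have correctly located the crux: to subtract ``a smooth function carrying the edge data near $\mathbf 0$'' you must first know that the Dirichlet data on the two edges $\Gamma_h^\pm$ are the traces of a single function that is smooth in a full two-dimensional neighbourhood of the vertex, which $H^1$ membership and edge-wise analyticity do not give you. The problem is that your proposed repair of this step does not work as stated. You appeal to the Kondrat'ev--Grisvard expansion for $u^-$ ``governed by the operator pencil of the sector with Dirichlet conditions on the two straight edges,'' but that is the pencil of the problem you only obtain \emph{after} the subtraction has been justified; the expansion of $u^-$ itself at the corner is governed by the pencil of the transmission problem across the broken interface (whose exponents depend on $q_-/q_+$ as well as the angle), and such a transmission-corner expansion is precisely the kind of statement Lemma~\ref{lem23} is meant to deliver and is not what Lemma~\ref{lem:22} provides. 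So the argument is circular at exactly the step you flagged as the main obstacle, and the claimed exponent $\alpha\simeq\min\{\pi/(\theta_M-\theta_m),1\}$ is likewise borrowed from the wrong pencil.

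The paper closes this gap with a simpler device that you did not use: since $u^+$ is real analytic in $B_{2h}\backslash\overline{S_{2h}}$ (in the applications it is a Helmholtz solution there), one takes $w$ to be its analytic extension to all of $B_h$ and subtracts $w$ itself, not merely a lifting of the boundary data. The transmission condition $u^-=u^+$ on $\Gamma_h^\pm$ then gives $v:=u^--w=0$ on both edges with no need to discuss the value of $u^-$ at the vertex at all, while $v$ solves $\Delta v+k^2q_-v=f$ in $S_h$ with $f=-(\Delta w+k^2q_-w)\in L^\infty(S_h)\subset L^2(S_h)$ and smooth data on $\Lambda_h$. Lemma~\ref{lem:22} then applies verbatim, and $u^-=v+w\in C^\alpha(\overline{S_h})$ since $w$ is smooth. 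If you replace your ``smooth lifting of $g$'' by this analytic extension of $u^+$, the rest of your argument (including the Sobolev embedding $H^2\hookrightarrow C^{0,\beta}$ in two dimensions for the regular part) goes through.
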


\begin{proof}

Since $u^+$ is real analytic in $B_{2h}\backslash\overline{S_{2h}}$, we let $w$ denote the analytic extension of $u^+|_{B_h\backslash\overline{S_h}}$ in $B_h$. By using the transmission condition on $\Gamma_h^\pm$, one clearly has that $u^-=u^+=w$ on $\Gamma_h^\pm$. Set $v=u^- -w$. It can be directly verified that
	\begin{equation}\notag 
\begin{cases}
\Delta v+k^2 q_- v=f & \mbox{ in }\ S_h, \\[5pt] 
v= g & \mbox{ on }\ \Lambda_h, \\[5pt] 
v= 0 & \mbox{ on }\ \Gamma_h^\pm, 
\end{cases}
\end{equation}
where {\bl $f=-(\Delta w+k^2 q_- w)\in L^\infty(S_h)$} and {\bl $g\in C^{\infty}(\overline \Lambda_{h})$}. By virtue of Lemma \ref{lem:22}, it is clear that there exists $\alpha\in (0, 1)$ such that $v\in C^\alpha(\overline{ S_h} )$. Hence, we readily have that $u^-\in C^\alpha ( \overline{ S_h} )$. 

The proof is complete. 
\end{proof}


\section{Statement of main uniqueness results}\label{sect:4}

In establishing the uniqueness results for the inverse problems \eqref{inverse11} and \eqref{inverse22}, we shall require that the total wave fields in \eqref{eq:model1} and \eqref{eq:model4} do not vanish at the vertices of the underlying conductive cells. For a convenient reference in our subsequent arguments, we make it an admissibility condition as follows. 


\begin{definition}\label{admissible}
	Let $(\Omega;q,\lambda )$ be polygonal-nest or polygonal-cell conductive medium body  as described in Definitions~\ref{def:3} and \ref{def:4}, respectively. The scatterer is said to be admissible if it fulfills the following condition: consider a polygonal-nest conductive medium body with the polygonal-nest partition $\{\Sigma_{\ell} \}_{\ell=1}^N$, for any vertex $\mathbf{x}_c\in\partial\Sigma_\ell$, $u(\mathbf x_c)\neq0$; consider a  polygonal-cell conductive medium body, for any vertex $\mathbf{x}_c\in\partial\Omega$, $u(\mathbf x_c)\neq0$, where $u$ is the solution to \eqref{eq:model1} or \eqref{eq:model4}. 
\end{definition}

The admissibility condition in Definition~\ref{admissible} was also used in \cite{CDL2}, which is a non-void condition and can be fulfilled in certain generic scenarios, e.g. for the low-wavenumber case, namely $k$ is sufficiently small. We shall not explore this point in this paper and we refer to \cite[Page 38]{CDL2} for more relevant discussions regarding it. 

We are in a position to present the main unique identifiability results. Our strategy is to first determine the geometric structure of a conductive medium body, namely the underlying polygonal partition and then the conductive parameter and finally the medium parameter inside the body. Our arguments shall be mainly confined around a vertex corner, and hence it applies to both \eqref{inverse11} associated with \eqref{eq:model1} and \eqref{inverse22} associated with \eqref{eq:model4}. As mentioned earlier, we next mainly deal with \eqref{inverse11} associated with \eqref{eq:model1}, and finally remark the extension to \eqref{inverse22} associated with \eqref{eq:model4}. 

We first present a theorem that establishes a ``local'' uniqueness  regarding the shape of an admissible polygonal-nest or polygonal-cell conductive medium body by a single  far-field measurement without knowing the potential $q$ and the conductive parameter $\lambda$. For the polygonal-nest medium body, the ``local" uniqueness results readily implies a ``global" uniqueness result. The proof is similar to the proof of \cite[Theorem 4.1]{CDL2} by using Lemma \ref{Th:1.2}. However, we should emphasize that the technical condition \eqref{eq:th21 cond} in Lemma \ref{Th:1.2} can be automatically fulfilled in proving Theorem \ref{Th:3.1}.   For the completeness, we give the detailed proof of Theorem \ref{Th:3.1} here.

\begin{theorem}\label{Th:3.1}
Consider the conductive scattering problem \eqref{eq:model1} associated with two admissible polygonal-nest or polygonal-cell conductive medium bodies $(\Omega_j; q_j, \lambda_j)$, $j=1,2$, in $\mathbb{R}^2$. Let $u_\infty^j(\hat{\mathbf x}; u^i)$ be the far-field pattern associated with the conductive medium body $(\Omega_j; q_j, \lambda_j)$ and the incident field $u^i$, respectively. Suppose that 
\begin{equation}\label{eq:nn1}
u_\infty^1(\hat{\mathbf x}; u^i)=u_\infty^2(\hat{\mathbf x}; u^i)
\end{equation}
for all $\hat{\mathbf x}\in\mathbb{S}^{1}$ and a fixed incident wave $u^i$. Then 
\begin{equation}\label{eq:nn3}
\Omega_1\Delta\Omega_2:=\big(\Omega_1\backslash\Omega_2\big)\cup \big(\Omega_2\backslash\Omega_1\big)
\end{equation}
cannot possess a corner. Furthermore, if $\Omega_1$ and $\Omega_2$ are two admissible polygonal-nest conductive medium bodies, one must have
\begin{equation}\label{eq:nn4}
\partial \Omega_1=\partial \Omega_2. 
\end{equation}
\end{theorem}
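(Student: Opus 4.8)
\textbf{Proof proposal for Theorem \ref{Th:3.1}.}

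The plan is to argue by contradiction, using the standard unique continuation / well-posedness machinery to transport the far-field identity into a local statement, and then to invoke Lemma \ref{Th:1.2} at a carefully chosen corner. Suppose \eqref{eq:nn1} holds but $\Omega_1 \Delta \Omega_2$ possesses a corner. By Rellich's lemma and the assumed well-posedness of \eqref{eq:model1} in $H^1_{loc}(\mathbb{R}^2)$, the scattered fields coincide, $u_1^s = u_2^s$, in the unbounded connected component $G$ of $\mathbb{R}^2 \setminus \overline{\Omega_1 \cup \Omega_2}$, hence the total fields agree there as well: $u_1 = u_2 =: u$ in $G$. Without loss of generality, pick a vertex $\mathbf{x}_c$ of $\Omega_2$ (say) that lies on $\partial G$ and such that a small neighbourhood $B_h(\mathbf{x}_c)$ satisfies $B_h(\mathbf{x}_c) \cap \Omega_1 \subset G$ (this is where the corner of the symmetric difference is used — at such a corner one of the two domains locally coincides with $G$ while the other has a genuine wedge $S_h$). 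Translating $\mathbf{x}_c$ to the origin, we are in the geometric setting of \eqref{nota2}: on $S_h$ the field $u_2^- = u|_{S_h}$ solves $\Delta u_2^- + k^2 q_2 u_2^- = 0$ where $q_2$ is the constant value of the potential in the relevant cell $U_\ell$ adjacent to $\mathbf{x}_c$; on the complementary side $u_1 = u$ solves the background (or exterior-cell) Helmholtz equation $\Delta u + k^2 u = 0$ (or with the relevant constant potential), and it is real-analytic across $\Gamma_h^\pm$ because on that side there is no interface through $B_h(\mathbf{x}_c)$ — so it extends analytically to all of $B_h$, giving the pair $(w, v)$ with $w = u_2^-$, $v =$ the analytic extension, satisfying the conductive transmission system \eqref{eq:in eig} on $S_h$ with $\lambda = \lambda_{2,\ell}$ restricted to $\Gamma_h^\pm$.

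The next step is to verify the hypotheses of Lemma \ref{Th:1.2}. The angle condition \eqref{eq:ass31} holds because $\Omega_2$ is a polygon and a polygonal vertex has interior opening angle in $(0,\pi)\cup(\pi,2\pi)$, and after the reflection/relabelling that places the corner symmetrically we may arrange $-\pi < \theta_m < \theta_M < \pi$ with $\theta_M - \theta_m \neq \pi$ (convexity in the polygonal-nest case, or the at-least-one-vertex condition (3) of Definition \ref{def:2} in the polygonal-cell case, supplies a corner we can use — in the cell case the two edges $\Gamma_\ell^\pm$ at $\mathbf{x}_{c,\ell}$ lie on $\partial\Omega$, so exactly one side is exterior). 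The Hölder condition \eqref{eq:th21 cond} on $w$ is exactly where the remark in the statement enters: $w = u_2^-$ is, on the one hand, a solution of a constant-coefficient Helmholtz equation in $S_{2h}$ that is real-analytic inside, and on the other hand it matches the analytic function $v$ on $\Gamma_h^\pm$; hence $(w,v)$ fits the frame of Lemma \ref{lem23} with $q_- = q_{2,\ell}$ and $q_+ = 1$ (or the appropriate exterior constant), and Lemma \ref{lem23} delivers $w = u_2^- \in C^\alpha(\overline{S_h})$ for some $\alpha \in (0,1)$. Thus \eqref{eq:th21 cond} is automatic — no a priori regularity need be assumed — which is the point emphasized before the theorem. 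Lemma \ref{Th:1.2} then yields $u(\mathbf{x}_c) = v(\mathbf{0}) = w(\mathbf{0}) = 0$, contradicting the admissibility of $(\Omega_2; q_2, \lambda_2)$ (Definition \ref{admissible}), since $\mathbf{x}_c$ is a vertex of the relevant $\Sigma_\ell$. Hence $\Omega_1 \Delta \Omega_2$ has no corner.

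Finally, for the polygonal-nest case we upgrade this local statement to the global identity \eqref{eq:nn4}. Both $\partial\Omega_1$ and $\partial\Omega_2$ are boundaries of convex polygons ($\Sigma_1$ in each nest). If $\partial\Omega_1 \neq \partial\Omega_2$, then $\Omega_1 \Delta \Omega_2$ is a nonempty open set; since $\Omega_1, \Omega_2$ are convex polygons, the boundary of each connected component of $\Omega_1 \Delta \Omega_2$ is a polygonal curve built from pieces of $\partial\Omega_1$ and $\partial\Omega_2$, and a standard geometric argument (two distinct convex polygons whose symmetric difference is nonempty must produce at least one vertex of one polygon lying in the interior of the other polygon or on an edge in a non-flat way) shows $\Omega_1 \Delta \Omega_2$ necessarily contains a corner — contradicting the previous paragraph. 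Therefore $\partial\Omega_1 = \partial\Omega_2$.

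\textbf{Main obstacle.} The genuinely delicate point is the bookkeeping at the chosen corner: one must ensure that a corner of $\Omega_1 \Delta \Omega_2$ really produces a configuration in which \emph{one} side of a small disk is a clean polygonal wedge $S_h$ lying inside a single cell (so the potential is a genuine constant there and Lemma \ref{lem23}/Lemma \ref{Th:1.2} apply verbatim) while the \emph{other} side carries a field that is real-analytic across the two edges $\Gamma_h^\pm$. When the corner of the symmetric difference arises from a vertex of an \emph{inner} layer $\Sigma_\ell$ ($\ell \geq 2$) or from a cell in the polygonal-cell geometry, the ``other side'' is itself part of another cell with a possibly different constant potential, and one has to check that across $\Gamma_h^\pm$ there is still no material interface of the \emph{second} medium body within $B_h$, so that analyticity persists; this requires choosing $h$ small enough relative to all the cell geometries of both $\Omega_1$ and $\Omega_2$ and carefully separating the several cases (vertex-in-interior, edge-on-edge with differing potentials, etc.), exactly as in the proof of \cite[Theorem 4.1]{CDL2}. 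Everything else is either quoted (Lemmas \ref{Th:1.2}, \ref{lem:22}, \ref{lem23}, Rellich, well-posedness) or elementary convex geometry.
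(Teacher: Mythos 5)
Your proposal is correct and follows essentially the same route as the paper's own proof: Rellich's lemma to identify the total fields in the unbounded component, selection of a corner of the symmetric difference at which one body is locally absent so that the local system fits Lemma \ref{Th:1.2}, the Hölder regularity \eqref{eq:th21 cond} supplied automatically by Lemma \ref{lem23}, the contradiction with the admissibility condition of Definition \ref{admissible}, and convexity of the outermost polygons to upgrade the local statement to \eqref{eq:nn4} in the nest case. The only cosmetic difference is that you speak of analytically extending $u_1$ across $\Gamma_h^\pm$, whereas in the paper's setup $u_1$ already satisfies the free Helmholtz equation in all of $B_h$ (since $B_h\cap\overline{\Omega}_1=\emptyset$), so $v=u_1^+\in H^2(S_h)$ directly; likewise the case analysis you flag as the main obstacle (inner layers, adjacent cells with different potentials) does not arise in Theorem \ref{Th:3.1} itself, since the symmetric difference only involves the outer boundaries $\partial\Omega_1$ and $\partial\Omega_2$.
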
 

\begin{proof}
We prove the first part of the theorem by an absurdity argument. By contradiction, we assume that there is a corner contained in $\Omega_1\Delta\Omega_2$. We only consider the case with polygonal-nest case and the polygonal-cell case can be proved in a similar manner. Using the notation \eqref{eq:def3}, we can write the admissible polygonal-nest  conductive medium bodies $(\Omega_j; q_j, \lambda_j)$, $j=1,2$, as follows
	\begin{equation}\label{eq:def3two}
(\Omega_j; q_j, \lambda_j)=\bigcup_{\ell=1}^{N_j} (U_{\ell,j}; q_{\ell,j}, \lambda_{{\ell,j}}),
\end{equation}
and
\begin{equation}\label{eq:def32two}
\Omega_j=\bigcup_{\ell=1}^{N_j} U_{\ell,j},\ \ q_j=\sum_{\ell=1}^{N_j}  q_{\ell,j} \chi_{U_{\ell,j}},\ \ \lambda_j=\sum_{\ell=1}^{N_j} \lambda_{{\ell,j} }\chi_{\partial \Sigma_{\ell,j} }. 
\end{equation}
Without loss of generality we can assume that the vertex $O$ of the corner $\Omega_2 \cap W$ satisfies that $O \in \partial \Omega_2$ and $ O \notin \overline{\Omega}_1$, where $O$ coincides with the origin, and furthermore we assume that the concerned corner satisfies that $S_h\Subset\Sigma_{1,2} $, where $\Sigma_{1,2} $ is defined in \eqref{eq:def3two}.

Let $u_1(\mathbf x)$ and  $u_2(\mathbf  x)$ be the total wave {\bl fields} associated with $\Omega_1$ and $\Omega_2$, respectively. 	Since $u_\infty^1(\hat {\mathbf x}; u^i)=u_\infty^2(\hat {\mathbf x}; u^i)  $ for all $\hat {\mathbf x} \in  {\mathbb S}^1$, applying Rellich's Theorem (see \cite{CK}),  we know that $u_1^s=u_2^s$ in $\R^2 \backslash (  \overline{\Omega}_1 \cup \overline{\Omega}_2 )$. Thus
	\begin{equation}\label{eq:u1u2}
		u_1(\mathbf x)=u_2(\mathbf  x)
	\end{equation}
	for all $\mathbf x \in \R^2 \backslash (  \overline{\Omega}_1 \cup \overline{\Omega}_2 )$. Following the notations in \eqref{nota2}, by using the conductive transmission condition \eqref{eq:conductm1},   we have that
	$$
	u_2^-=u_2^+=u_1^+,\quad \partial_\nu u_2^- = \partial_\nu u_2^+ + \lambda_{{1,2}}  u_2^+=\partial_\nu u_1^+ +  \lambda_{{1,2}}  u_1^+ \mbox{ on } \Gamma_h^\pm,
	$$ 
	where the superscripts $(\cdot)^-, (\cdot)^+$ stand for the limits taken from $\Sigma_{1,2} \Subset \Omega_2$ and $\mathbb{R}^2\backslash\overline{\Omega}_2$ respectively. Moreover, it is supposed the neighbourhood $B_h(O)$ is  sufficiently  small such that
	$$
	\Delta u_1^+ + k^2 u_1^+ =0 \mbox{ in } B_h,\quad \Delta u_2^-  +k^2 q_{1,2} u_2^-=0 \mbox{ in } S_h\Subset \Sigma_{2,1}, 
	$$
where $q_{1,2}$ is defined in \eqref{eq:def32two}.  Clearly $u_1^+\in H^2(S_h)$ and $u_2^-\in H^1(S_h)$.  Due to Lemma \ref{lem23}, we have $ u_2^- \in C^\alpha (\overline{S_h} )$.   Applying Lemma  \ref{Th:1.2} and using the fact that $u_1$ is continuous at the vertex $\mathbf 0$, we have 
	$$
	u_1({\mathbf 0})=0, 
	$$
	which  contradicts to the admissibility condition  in Definition \ref{admissible}.   Therefore we prove that $\Omega_1\Delta\Omega_2$ cannot possess a corner under \eqref{eq:nn1}.  

{\bl Furthermore, if $\Omega_1$ and $\Omega_2$ are two admissible polygonal-nest  conductive medium bodies fulfilling $\partial\Omega_1\neq\partial \Omega_2$}, 	we know that $\Omega_1$ and $\Omega_2$ are convex from Definition  \ref{def:1}. Hence there must exits a corner lying on the boundary of $\R^2 \backslash (  \overline{\Omega}_1 \cup \overline{\Omega}_2 )$. Hence \eqref{eq:nn4} can be easily obtained by using the conclusion in the first part of this theorem.

	The proof is complete. 
\end{proof}

Next we present our main unique recovery {\bl results} for  the piecewise constant scattering potential $q$ and conductive parameter $\lambda$ by a single far-field measurement when the scatterer associated with  \eqref{eq:model1} is an admissible polygonal-nest or polygonal-cell conductive medium body. It is pointed out that we need the a-prior knowledge on the cell structure of an admissible polygonal-cell conductive medium body. However, the unique recovery results for $q$ and $\lambda$ can be established 
for an admissible polygonal-nest conductive medium body by a single measurement without knowing its nest partition. Indeed, the nest partition of an admissible polygonal-nest  conductive medium body can be uniquely determined by a single far-field pattern.   The proofs of the following two theorems are postponed to Section \ref{sec3}.

\begin{theorem}\label{main1}
Considering the conductive scattering problem \eqref{eq:model1} associated with two admissible polygonal-cell  conductive medium  bodies $(\Omega; q_j,\lambda_j)$ in $\R^2$, $j=1,2$.  For $j=1,2$, we let the material parameters $q_j$ and $\lambda_{j }$ with a common polygonal-cell partition $\{\Sigma_{\ell} \}_{\ell=1}^N$ described in Definition \ref{def:2} be characterized by
	\begin{equation}\label{q-m1}
		q_j=\sum_{\ell=1}^{N}q_{\ell,j}\chi_{\Sigma_{\ell,j}},\ \ \lambda_j=\sum_{\ell=1}^{N} \lambda_{j }^*\chi_{\partial \Sigma_{\ell,j} }. 
			\end{equation}  
 Let $u_j^{\infty}(\hat{\mathbf{x}}; u^i)$ be the corresponding far-field pattern associated with the incident wave $u^i$ corresponding to  $(\Omega; q_j,\lambda_j)$, respectively. Suppose that $(\Omega; q_j,\lambda_j)$, $j=1,2$, fulfill
	\begin{equation}\label{main eq1}
	u_1^{\infty}(\hat{\mathbf{x}}; u^i)=u_2^{\infty}(\hat{\mathbf{x}}; u^i)
	\end{equation}
	for all $\hat{\mathbf{x}}\in\mathbb{S}^1$ and a fixed incident wave $u^i$. Then we have $q_{1}=q_{2}$ and $\lambda_{1}=\lambda_{2}$.
\end{theorem}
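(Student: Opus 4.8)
The plan is to work locally at a distinguished vertex of each cell and to compare the two interior solutions there by testing the difference against the CGO solution $u_0$ of Lemma~\ref{lem3-1}; the conductive jump and the contrast jump will be read off at two successive powers of the CGO parameter. First I would invoke Rellich's theorem together with unique continuation to deduce from \eqref{main eq1} that $u_1=u_2$ in $\R^2\setminus\overline\Omega$; write $u_j^\pm$ for the restrictions of the total field to $\Omega$ and to $\R^2\setminus\overline\Omega$. Fix a cell $\Sigma_\ell$ and, using Definition~\ref{def:2}(3), a vertex $\mathbf x_{c,\ell}$ whose two incident edges $\Gamma_\ell^\pm$ lie on $\partial\Omega$; after a rigid motion put $\mathbf x_{c,\ell}=\mathbf 0$ so that, for $h$ small, $S_h\Subset\Sigma_\ell$, $\Gamma_h^\pm\subset\Gamma_\ell^\pm$, and the opening obeys $-\pi<\theta_m<\theta_M<\pi$ with $\theta_M-\theta_m\neq\pi$ (the last point because two distinct edges of the polygon $\Sigma_\ell$ are never collinear, so $\mathbf 0$ is a genuine corner of $\partial\Omega$). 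In $S_h$ one has $\Delta u_j^-+k^2 q_{\ell,j}u_j^-=0$, and on $\Gamma_h^\pm$ the conductive transmission conditions read $u_j^-=u_j^+$ and $\partial_\nu u_j^-=\partial_\nu u_j^++\lambda_j^* u_j^+$. Since $u_1^+=u_2^+$ in the exterior, their traces and normal-derivative traces on $\Gamma_h^\pm$ coincide, so $w:=u_1^--u_2^-$ satisfies
\begin{align*}
&\Delta w + k^2 q_{\ell,1} w = -k^2(q_{\ell,1}-q_{\ell,2})\,u_2^- \quad\text{in }S_h,\\
&w=0,\qquad \partial_\nu w=(\lambda_1^*-\lambda_2^*)\,u_1^+ \quad\text{on }\Gamma_h^\pm .
\end{align*}

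\textbf{Regularity inputs.} By Lemma~\ref{lem23} (applied with $q_-=q_{\ell,j}$, $q_+=1$) there is $\alpha\in(0,1)$ with $u_1^-,u_2^-\in C^\alpha(\overline{S_h})$, and $u_j^+$ is continuous up to $\Gamma_h^\pm$ (real-analytic in the exterior, with at worst a corner behaviour at $\mathbf 0$). Since $u_1=u_2$ in the exterior and both extend continuously to the corner, $u_1^-(\mathbf 0)=u_2^-(\mathbf 0)=u_j(\mathbf x_{c,\ell})=:c_0$; moreover the admissibility condition (Definition~\ref{admissible}), valid because $\mathbf x_{c,\ell}\in\partial\Omega$ is a vertex of $\Omega$, gives $c_0\neq0$. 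Hence $w\in C^\alpha(\overline{S_h})$ with $w(\mathbf 0)=0$ (so $|w(\mathbf x)|\le C|\mathbf x|^\alpha$), while $u_2^-=c_0+\mathcal O(|\mathbf x|^\alpha)$ near $\mathbf 0$.

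\textbf{CGO test and extraction.} Next I would insert $u_0(s\,\cdot)$ from Lemma~\ref{lem3-1}. Since $S_h$ avoids the ray $\R^2_{0,-}$, $\Delta\big(u_0(s\,\cdot)\big)=0$ in $S_h$, so Green's second identity on $S_h$ — justified in the weak sense by excising $B_\epsilon(\mathbf 0)$ and letting $\epsilon\to0^+$, the corner terms vanishing because $w\in C^\alpha$ and $\nabla u_0(s\,\cdot)$ has only an $r^{-1/2}$-type ($L^2$) singularity — yields
\begin{align*}
-k^2 q_{\ell,1}\!\int_{S_h}\! u_0(s\mathbf x)\,w\,d\mathbf x &- k^2(q_{\ell,1}-q_{\ell,2})\!\int_{S_h}\! u_0(s\mathbf x)\,u_2^-\,d\mathbf x\\
&=(\lambda_1^*-\lambda_2^*)\!\int_{\Gamma_h^+\cup\Gamma_h^-}\! u_0(s\mathbf x)\,u_1^+\,d\sigma+\mathcal O\!\left(e^{-c\sqrt s}\right),
\end{align*}
the arc contribution over $\Lambda_h$ being exponentially small by \eqref{eq:1.5}. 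Using $|w|\le C|\mathbf x|^\alpha$, $u_2^-=c_0+\mathcal O(|\mathbf x|^\alpha)$ and the bounds \eqref{eq:xalpha}, \eqref{eq:u0w}, \eqref{eq:1.5}, the two bulk integrals are $\mathcal O(s^{-\alpha-2})$ and $6\mathrm i\,c_0(e^{-2\theta_M\mathrm i}-e^{-2\theta_m\mathrm i})\,s^{-2}+\mathcal O(s^{-\alpha-2})$ respectively, while a one–dimensional computation on $\Gamma_h^\pm$ (substitute $t=\sqrt{s\rho}$) gives $2c_0(e^{-\theta_M\mathrm i}+e^{-\theta_m\mathrm i})\,s^{-1}+\mathcal O(s^{-1-\alpha})$. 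Multiplying by $s$ and letting $s\to+\infty$ kills every term but the line integral, so $(\lambda_1^*-\lambda_2^*)\,c_0(e^{-\theta_M\mathrm i}+e^{-\theta_m\mathrm i})=0$; since $c_0\neq0$ and $e^{-\theta_M\mathrm i}+e^{-\theta_m\mathrm i}\neq0$ exactly because $\theta_M-\theta_m\neq\pi$, I conclude $\lambda_1^*=\lambda_2^*$. Feeding this back, the line integral disappears; multiplying the remaining identity by $s^2$ and letting $s\to+\infty$ gives $k^2(q_{\ell,1}-q_{\ell,2})\,c_0(e^{-2\theta_M\mathrm i}-e^{-2\theta_m\mathrm i})=0$, and as $k\neq0$, $c_0\neq0$ and $e^{-2\theta_M\mathrm i}-e^{-2\theta_m\mathrm i}\neq0$ (again by $\theta_M-\theta_m\neq\pi$), I obtain $q_{\ell,1}=q_{\ell,2}$. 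Because every cell $\Sigma_\ell$ has such a vertex, this holds for all $\ell=1,\dots,N$, whence $q_1=q_2$ and $\lambda_1=\lambda_2$.

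\textbf{Main obstacle.} The delicate part is the corner analysis, in two respects. First, $w$ is only $H^1$ (indeed only $H^{1+\alpha-\epsilon}$) at the corner owing to the elliptic corner singularity, so the Green identity above must be set up carefully through the decomposition of Lemmas~\ref{lem:22}--\ref{lem23} and a shrinking-disk limit, controlling the boundary terms on $\partial B_\epsilon$ against the singular gradient of $u_0(s\,\cdot)$. Second — and this is the genuinely new feature compared with the $\lambda\equiv0$ setting of \cite{BL2017} — one must disentangle the two asymptotic scales: the conductive jump $\lambda_1^*-\lambda_2^*$ sits at order $s^{-1}$ (a boundary effect, carried by the line integrals over $\Gamma_h^\pm$), while the contrast jump $q_{\ell,1}-q_{\ell,2}$ sits at order $s^{-2}$ (a bulk effect); the order $s^{-1}$ therefore has to be isolated and cleared before the order $s^{-2}$ can be read off, and one has to verify that the two angular constants $e^{-\theta_M\mathrm i}+e^{-\theta_m\mathrm i}$ and $e^{-2\theta_M\mathrm i}-e^{-2\theta_m\mathrm i}$ are both nonzero precisely under the hypothesis $\theta_M-\theta_m\neq\pi$.
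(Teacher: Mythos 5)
Your proposal is correct and follows essentially the same route as the paper: Rellich's theorem plus unique continuation to identify the exterior fields, localization at a cell vertex whose two edges lie on $\partial\Omega$, and a Green-identity test of the difference $u_1^--u_2^-$ against the CGO solution $u_0(s\,\cdot)$, reading off $\lambda_1^*-\lambda_2^*$ at order $s^{-1}$ and $q_{\ell,1}-q_{\ell,2}$ at order $s^{-2}$ exactly as in the paper's Lemma~\ref{prop1} (note $\mu(\theta)^{-2}=e^{-\mathrm i\theta}$, so your angular constants agree with the paper's). The only difference is organizational: the paper factors the CGO extraction into a standalone lemma and then invokes it, whereas you inline it.
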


\begin{theorem}\label{main2}
Considering the conductive scattering problem \eqref{eq:model1} associated with two admissible polygonal-nest conductive medium bodies $(\Omega_j; q_j,\lambda_j)$ in $\R^2$, $j=1,2$, where the associated polygonal-nest partitions $\{\Sigma_{\ell,1}\}_{\ell=1}^{N_1}$ and $\{\Sigma_{\ell,2}\}_{\ell=1}^{N_2}$  are described in Definition \ref{def:1} with $\Omega_1=\bigcup_{\ell=1}^{N_1} U_{\ell,1}$ and $\Omega_2=\bigcup_{\ell=1}^{N_2} U_{\ell,2}$, where $U_{\ell,j}=\Sigma_{\ell,j} \backslash\overline{\Sigma_{\ell+1,j}}$, $\ell=1,\ldots, N_j$.  For $j=1,2$, we let the material parameters $q_j$ and $\lambda_{j }$ be characterized by
	\begin{equation}\label{q-m1}
		q_j=\sum_{\ell=1}^{N_j}q_{\ell,j}\chi_{U_{\ell,j}},\ \ \lambda_j=\sum_{\ell=1}^{N_j} \lambda_{{\ell,j} }\chi_{\partial \Sigma_{\ell,j} }. 
			\end{equation}  
		  Let $u_j^{\infty}(\hat{\mathbf{x}}; u^i)$ be the corresponding far-field pattern associated with the incident wave $u^i$ corresponding to {\bl $(\Omega_j; q_j,\lambda_j)$}, respectively. Suppose that  {\bl $(\Omega_j; q_j,\lambda_j)$}, $j=1,2$, fulfill
	\begin{equation}\label{main eq1}
	u_1^{\infty}(\hat{\mathbf{x}}; u^i)=u_2^{\infty}(\hat{\mathbf{x}}; u^i)
	\end{equation}
	for all $\hat{\mathbf{x}}\in\mathbb{S}^1$ and a fixed incident wave $u^i$. Then we have $N_1=N_2=N$,
	$\partial \Sigma_{\ell,1}= \partial \Sigma_{\ell,2}$ for $\ell=1,\ldots, N$,  $q_1=q_2$ and $\lambda_1=\lambda_2$.

\end{theorem}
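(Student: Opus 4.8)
The plan is to induct on the nested layers from the outermost inward, at each stage first pinning down the geometry of the relevant polygonal boundary, then the conductive parameter on it, then the medium parameter inside. First, by Theorem~\ref{Th:3.1} the equality of far-field patterns \eqref{main eq1} forces $\partial\Omega_1 = \partial\Omega_2$, hence $\Sigma_{1,1} = \Sigma_{1,2} =: \Sigma_1$ and, as in \eqref{eq:u1u2} via Rellich's theorem, $u_1 = u_2$ in $\R^2\setminus\overline{\Omega}$; denote this common exterior field $u^+$. Pick a vertex $\mathbf x_c$ of $\partial\Sigma_1$ and, after translating, place it at the origin inside a small sector $S_h$ on which $u^+$ extends real-analytically (Lemma~\ref{lem23} applies). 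On $\Gamma_h^\pm$ the two conductive transmission conditions read $u_j^- = u^+$ and $\partial_\nu u^+ + \lambda_{1,j} u^+ = \partial_\nu u_j^-$ for $j=1,2$. Setting $v = u_1^- - u_2^-$, subtraction gives $v = 0$ and $\partial_\nu v = (\lambda_{1,2}-\lambda_{1,1})u^+$ on $\Gamma_h^\pm$, while $\Delta v + k^2 v = k^2(q_{1,2}u_2^- - q_{1,1}u_1^-) =: F$ in $S_h$ with $F\in L^\infty$. The admissibility condition gives $u^+(\mathbf 0)\neq 0$, so if $\lambda_{1,1}\neq\lambda_{1,2}$ then $\partial_\nu v$ does not vanish to infinitely high order at the corner; I would test this against the CGO solution $u_0(s\,\cdot)$ of Lemma~\ref{lem3-1} and extract, from the boundary integral along $\Gamma_h^\pm$, a leading-order term of definite nonzero size contradicting the decay estimates \eqref{eq:xalpha}--\eqref{eq:1.5} — this is precisely the mechanism behind Lemma~\ref{Th:1.2}, here applied to the difference. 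Hence $\lambda_{1,1} = \lambda_{1,2} =: \lambda_1$.

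With $\lambda_1$ fixed, $v$ now solves a homogeneous-boundary transmission-type problem: $v = 0$ and $\partial_\nu v = 0$ on $\Gamma_h^\pm$, $\Delta v + k^2 v = F$ in $S_h$ with $F = k^2(q_{1,2} - q_{1,1})u_1^- + k^2 q_{1,2}(u_2^- - u_1^-) = k^2(q_{1,2}-q_{1,1})u_1^- + k^2 q_{1,2} v$. Having Cauchy data zero on both edges, by unique continuation $v \equiv 0$ in a neighbourhood of the corner, whence $u_1^- = u_2^-$ there and, again by analyticity, throughout $U_{1,1}\cap U_{1,2}$. Then $\Delta u_1^- + k^2 q_{1,1} u_1^- = 0 = \Delta u_1^- + k^2 q_{1,2} u_1^-$ gives $(q_{1,1}-q_{1,2})u_1^- = 0$; since $u_1^-$ cannot vanish identically on an open set (it is a nonzero solution of a Helmholtz equation, being continuous and nonzero at the vertex), we conclude $q_{1,1} = q_{1,2} =: q_1$. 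Now the two scattering problems agree on the outermost annulus $U_1$, so the Cauchy data of $u_1^- = u_2^-$ match on $\partial\Sigma_{2,1}$ and $\partial\Sigma_{2,2}$; in particular $u_1$ solves, in $\R^2\setminus\Sigma_{2,1}$ and in $\R^2\setminus\Sigma_{2,2}$, the \emph{same} conductive scattering problem with the \emph{same} exterior data. This lets me restart the whole argument with $\Sigma_{2,j}$ in place of $\Sigma_{1,j}$: a corner in $\Sigma_{2,1}\Delta\Sigma_{2,2}$ would, by the Lemma~\ref{Th:1.2} mechanism applied at that corner, force $u_1$ to vanish there, contradicting admissibility; so $\partial\Sigma_{2,1} = \partial\Sigma_{2,2}$, and then the $\lambda$- and $q$-determination steps go through verbatim on $U_2$.

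Iterating, suppose the nest structures have different depths, say $N_1 < N_2$; after $N_1$ steps $\Omega$ is exhausted on the side of body $1$ but body $2$ still has a genuine interior polygon $\Sigma_{N_1+1,2}$ with a vertex. At that vertex the medium-$1$ field satisfies a plain Helmholtz equation on both sides of a would-be interface that is in fact fictitious for body $1$, yet the matched data force the transmission condition of body $2$; the same CGO argument yields vanishing of $u_1$ at the vertex, against admissibility. So $N_1 = N_2 = N$, and combining all steps gives $\partial\Sigma_{\ell,1} = \partial\Sigma_{\ell,2}$ for every $\ell$, $q_1 = q_2$, and $\lambda_1 = \lambda_2$, completing the proof. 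The main obstacle, and the place requiring the most care, is the very first determination step — extracting from Lemma~\ref{Th:1.2} (and its underlying CGO computation) a clean dichotomy that simultaneously rules out a geometric mismatch \emph{and}, once geometry is fixed, forces equality of the conductive parameters when $u^+(\mathbf 0)\neq 0$; this is exactly where the hypotheses $\Re\lambda_\ell\ge 0$ or $\Im\lambda_\ell\ge 0$ and the Hölder regularity from Lemma~\ref{lem23} are consumed, and where the present setting genuinely departs from the $\lambda\equiv 0$ case of \cite{BL2017}.
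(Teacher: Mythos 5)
Your overall architecture — strip the nest layer by layer, using Theorem~\ref{Th:3.1} (and its corner-contradiction mechanism) for each successive boundary $\partial\Sigma_{\ell,j}$, then a corner CGO analysis for $\lambda_{\ell}$ and $q_{\ell}$, then propagate $u_1=u_2$ inward to restart, and finally rule out $N_1\neq N_2$ by finding an extra admissible vertex — is exactly the paper's induction. The determination of $\lambda_{\ell,1}=\lambda_{\ell,2}$ as the leading ($s^{-1}$) term of the CGO identity is also the paper's mechanism (Part 1 of Lemma~\ref{prop1}).

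However, your determination of $q$ has a genuine gap. After fixing $\lambda_{1,1}=\lambda_{1,2}$ you assert that, since $v=u_1^--u_2^-$ has vanishing Cauchy data on $\Gamma_h^\pm$, ``by unique continuation $v\equiv 0$ near the corner,'' and you then deduce $q_{1,1}=q_{1,2}$ from $v\equiv 0$. This is circular: the correct equation for $v$ is $\Delta v+k^2q_{1,2}v=k^2(q_{1,2}-q_{1,1})u_1^-$, whose right-hand side is precisely the unknown quantity you are trying to kill. Holmgren/unique continuation from Cauchy data applies to the \emph{homogeneous} equation; with a nonzero analytic source the Cauchy problem on one edge has a unique, generally nonzero solution, so zero Cauchy data on both edges does not by itself force $v\equiv 0$ (away from the corner there is no contradiction at all — the incompatibility lives only at the vertex). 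The paper resolves this by running the CGO identity to the \emph{next} order: after the $s^{-1}$ terms cancel (giving $\eta_1=\eta_2$), the surviving $s^{-2}$ term is $6\mathrm{i}k^2(\omega_2-\omega_1)u_1^-(\mathbf 0)(e^{-2\theta_M\mathrm{i}}-e^{-2\theta_m\mathrm{i}})$ from \eqref{eq:u0w}, and since $u_1^-(\mathbf 0)\neq 0$ by admissibility and $e^{-2\theta_M\mathrm{i}}\neq e^{-2\theta_m\mathrm{i}}$, this forces $\omega_1=\omega_2$, i.e. $q_{1,1}=q_{1,2}$ (Part 2 of Lemma~\ref{prop1}). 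Only \emph{after} that does the equation for $v$ become homogeneous, so that Holmgren yields $v\equiv 0$ and lets you propagate $u_1=u_2$ to the next layer. So the fix is local: replace your unique-continuation step by a second application of the CGO expansion at order $s^{-2}$; the rest of your induction, including the $N_1=N_2$ argument, then goes through as in the paper.
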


\section{Proofs of Theorems \ref{main1} and \ref{main2}}\label{sec3}


In this section, we give the detailed proofs of Theorems \ref{main1} and \ref{main2}. Before that, we first derive the following lemma, which shall be used in our proofs. 

\begin{lemma}\label{prop1}
	Recall that $S_h$, $\Gamma_h^\pm$ and $B_h$ are defined in \eqref{nota2}.  Consider the following PDE system 
	\begin{equation}\label{eq:lemma41}
\begin{cases}
\Delta u^-_1+k^2 \omega_1 u^-_1=0 & \mbox{ in }\ S_h, \\[5pt] 
\Delta u^+_1 +k^2  \omega_1^+ u^+_1=0 & \mbox{ in }\ B_h \backslash \overline{ S_h}, \\[5pt] 
\Delta u^-_2+k^2 \omega_2 u^-_2=0 & \mbox{ in }\ S_h, \\[5pt] 
\Delta u^+_2 +k^2  \omega_2^+ u^+_2=0 & \mbox{ in }\ B_h \backslash \overline{ S_h}, \\[5pt] 
u^+_1= u^-_1,\quad \partial_\nu u^+_1 + \eta_1  u^+_1=\partial_\nu u^- _1& \mbox{ on }\  \Gamma_h^\pm , \\[5pt]
u^+_2= u^-_2,\quad \partial_\nu u^+_2 + \eta_2  u^+_2=\partial_\nu u^- _2& \mbox{ on }\  \Gamma_h^\pm , 
\end{cases}
\end{equation}
where $\omega_j$ and $\omega_j^+$,  $j=1,2$,  are nonzero constants {fulfilling that $\Re\omega_j>0$, $\Im \omega_j\geq0$, $\Re\omega_j^+>0$ and $\Im\omega_j^+\geq0$} in $S_h$ and $B_h\backslash \overline{ S_h}  $, respectively.  In addition,  $\eta_j$,  $j=1,2$, are constants on $\Gamma_h^{\pm }$ {fulfilling that $\Re\eta_j\geq0$ and $\Im\eta_j\geq0$}.  
Assume that $u_j=u_j^-\chi_{S_h}+u_j^+\chi_{B_h\backslash\overline{S_h}}\in H^1(B_h)$, $j=1, 2$, are solutions to \eqref{eq:lemma41} satisfying
	\begin{equation}\label{11}
	u_1^{+}=u_2^{+}\quad \mbox{ in } B_h \backslash \overline{ S_h},\quad \mbox{ and }u_2 ({\mathbf 0}) \neq 0.
	\end{equation}
Then we have {$\eta_1=\eta_2$ and $\omega_1=\omega_2$}. Furthermore, it yields that
\begin{equation}\label{eq:lem41 u1-}
		u_1^{-}=u_2^{-}\quad \mbox{in}\quad  S_h. 
\end{equation}
\end{lemma}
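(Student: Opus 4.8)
The plan is to convert the hypothesis $u_1^+=u_2^+$ into Cauchy data for the two interior fields on $\Gamma_h^\pm$, test the interior Helmholtz equations against the CGO solution $u_0$ of Lemma~\ref{lem3-1}, and read off $\eta_1=\eta_2$ and then $\omega_1=\omega_2$ from the asymptotics as the CGO parameter $s\to\infty$. First I would record the corner data: since $u_1^+=u_2^+$ in $B_h\setminus\overline{S_h}$, their traces and normal derivatives coincide on $\Gamma_h^\pm$, and feeding this into the two conductive transmission conditions in \eqref{eq:lemma41} (and using that this common exterior trace equals $u_2^-$ on $\Gamma_h^\pm$) gives, on $\Gamma_h^\pm$,
\[
u_1^-=u_2^-,\qquad \partial_\nu u_1^--\partial_\nu u_2^-=(\eta_1-\eta_2)\,u_2^- .
\]
By Lemma~\ref{lem23} (after possibly shrinking $h$) one has $u_1^-,u_2^-\in C^\alpha(\overline{S_h})$ for some $\alpha\in(0,1)$, whence $u_1^-(\mathbf 0)=u_2^-(\mathbf 0)=u_2(\mathbf 0)\neq 0$ by continuity of the interior fields together with \eqref{11}. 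In addition, the corner decomposition of Lemmas~\ref{lem:22}--\ref{lem23} expresses each $u_j^-$ as an $H^2(S_h)$ term plus an explicit $r^\alpha$-type singularity, which is what I will use to legitimize Green's formula at the vertex.

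Next I would set up the integral identity. Since $-\pi<\theta_m<\theta_M<\pi$, $\Delta u_0(s\,\mathbf x)=0$ in $S_h$; applying Green's formula in $S_h$ to each $u_j^-$ tested against $u_0(s\,\cdot)$, using $\Delta u_j^-=-k^2\omega_j u_j^-$, and subtracting the two identities, the relations above eliminate the term carrying $u_1^--u_2^-$ and turn the remaining $\Gamma_h^\pm$-term into a multiple of $\eta_1-\eta_2$:
\[
-k^2\!\int_{S_h}\!\big(\omega_1u_1^--\omega_2u_2^-\big)\,u_0(s\,\mathbf x)\,d\mathbf x=(\eta_1-\eta_2)\!\int_{\Gamma_h^\pm}\!u_2^-\,u_0(s\,\mathbf x)\,ds+\mathcal R(s),
\]
where $\mathcal R(s)$ collects the integrals over the arc $\Lambda_h$ and is exponentially small in $s$ by \eqref{eq:1.5} together with the interior analyticity of $u_j^-$ near $\Lambda_h$.

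Then I would carry out the asymptotics. Writing $u_j^-=u_2(\mathbf 0)+(u_j^- - u_2(\mathbf 0))$ with $|u_j^- - u_2(\mathbf 0)|\lesssim|\mathbf x|^\alpha$ and invoking Lemma~\ref{lem3-1}, the left-hand side equals $-k^2(\omega_1-\omega_2)u_2(\mathbf 0)\,c_0\,s^{-2}+O(s^{-2-\alpha})$ via \eqref{eq:u0w} and \eqref{eq:xalpha}--\eqref{eq:1.5}, with a constant $c_0\neq 0$ (this is where $\theta_M-\theta_m\neq\pi$ is used), while the analogous one-dimensional computation on $\Gamma_h^\pm$ gives $\int_{\Gamma_h^\pm}u_0(s\,\mathbf x)\,ds=c_1s^{-1}+(\text{exponentially small})$ with $c_1\neq 0$, so the right-hand side equals $(\eta_1-\eta_2)u_2(\mathbf 0)\,c_1\,s^{-1}+O(s^{-1-\alpha})$. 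Multiplying by $s$ and letting $s\to\infty$ forces $(\eta_1-\eta_2)u_2(\mathbf 0)c_1=0$, hence $\eta_1=\eta_2$; inserting this back, multiplying by $s^2$ and letting $s\to\infty$ forces $k^2(\omega_1-\omega_2)u_2(\mathbf 0)c_0=0$, hence $\omega_1=\omega_2$. Finally, with $\omega_1=\omega_2$ and $\eta_1=\eta_2$, the difference $u_1^--u_2^-$ solves the same Helmholtz equation in $S_h$ with vanishing Cauchy data on $\Gamma_h^\pm$, so Holmgren's theorem followed by unique continuation gives $u_1^-=u_2^-$ in $S_h$, i.e.\ \eqref{eq:lem41 u1-}.

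I expect the main obstacle to be the rigorous justification of the integral identity at the corner: because $u_j^-$ is in general only Hölder continuous (with a genuine $r^\alpha$-type singularity possible), Green's formula has to be applied on $S_h\setminus B_\varepsilon(\mathbf 0)$, and one must check — using the corner decomposition of Lemmas~\ref{lem:22}--\ref{lem23} and the bound $|\partial_r u_0(s\,\mathbf x)|\lesssim s^{1/2}|\mathbf x|^{-1/2}$ — that the contribution over $\partial B_\varepsilon$ tends to $0$ as $\varepsilon\to0$. A secondary nuisance is the bookkeeping of the competing powers of $s$ (the $s^{-1}$ boundary term, the $s^{-2}$ bulk term, and the Hölder remainders $O(s^{-1-\alpha})$ and $O(s^{-2-\alpha})$), arranged so that $\eta_1=\eta_2$ is peeled off before $\omega_1=\omega_2$.
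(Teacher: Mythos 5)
Your proposal is correct and follows essentially the same route as the paper's proof: reduce the hypothesis to the Cauchy data $u_1^-=u_2^-$ and $\partial_\nu u_1^--\partial_\nu u_2^-=(\eta_1-\eta_2)u_2^-$ on $\Gamma_h^\pm$, test against the CGO solution $u_0(s\,\cdot)$ via Green's formula on $S_h\setminus B_\varepsilon$, extract $\eta_1=\eta_2$ from the leading $s^{-1}$ boundary term and then $\omega_1=\omega_2$ from the $s^{-2}$ bulk term using \eqref{eq:u0w}, and conclude with Holmgren. The obstacles you flag (justifying the identity at the corner and the bookkeeping of powers of $s$) are exactly the points the paper handles with the excision $D_\epsilon=S_h\setminus B_\epsilon$ and the estimates $I_1,\dots,I_5$.
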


	
\begin{remark}\label{rem:nn1}
It is remarked that the PDE system \eqref{eq:lemma41} actually comes from the scattering system \eqref{eq:model1} or \eqref{eq:model4} that is localized around a corner of the concerned conductive medium body. Here, $\omega_j, \omega_j^+$ and $\eta_j$, are respectively the scattering potentials and the conductive parameters. This shall be clear in our subsequent proofs of Theorems~\ref{main1} and \ref{main2}. Hence, all the lemmas from Section~\ref{sect:3} can be applied in its proof in the following. 
\end{remark}

\begin{proof}[Proof of Lemma~\ref{prop1}] 

We divide the proof into three parts. 

\medskip

	\noindent {\bf Part 1.}~We first prove that $\eta_1=\eta_2$. 
	
	From Lemma \ref{lem23}, we have
\begin{equation}\label{eq:43 calpha}
u^-_1 \in C^\alpha(\overline{S_h}) \mbox{ and } u^-_2 \in C^\alpha(\overline{S_h}). 
\end{equation}
Due to \eqref{11}, by the trace theorem, we know that $u_1^+=u_2^+$ on $\Gamma_h^\pm$, which also implies $\partial_\nu u_1^+=\partial_\nu u_2^+$ on $\Gamma_h^\pm$. Thus from the conductive transmission boundary conditions in \eqref{eq:lemma41}, we can derive that $u_1^-=u_1^+=u_2^+=u_2^-$ on $\Gamma_h^\pm$.

	Define $v:=u_1^--u_2^-$, by direct calculations, we know that it satisfies
	\begin{equation}\label{prop1-2}
	\begin{cases}
	\Delta v+k^2\omega_1v=k^2(\omega_2-\omega_1)u_2^-&\mbox{in }S_h,\\
	v=0, \quad\partial_\nu v=(\eta_1-\eta_2)u_2^-&\mbox{on }\Gamma_h^\pm .
	\end{cases}
	\end{equation}

%
%
	
Recall that the CGO solution $u_0$ is defined in \eqref{eq:u0}. 		Since $u_0\notin H^2(B_\epsilon(\mathbf{0}))$ for $0<\epsilon<h$ around the origin,
define $D_\epsilon:=S_h\backslash B_\epsilon$. Then we can derive the following integral identiy by the fact that $\Delta u_0=0$:
		\begin{align}\label{int-id}
		\int_{D_\epsilon}\Delta v(\mathbf x) u_0(s\mathbf{x})\,d\mathbf x&=\int_{\Lambda_\epsilon}\left(\partial_\nu v(\mathbf x)u_0(s\mathbf{x})-\partial_\nu u_0(s\mathbf{x})v(\mathbf{x})\right)\,d\sigma\notag\\
		&+\int_{\Lambda_h}\left(\partial_\nu v(\mathbf x)u_0(s\mathbf{x})-\partial_\nu u_0(s\mathbf{x})v(\mathbf{x})\right)\,d\sigma\notag \\
		&+\int_{\Gamma^\pm_{(\epsilon,h)}}\left(\partial_\nu v(\mathbf x)u_0(s\mathbf{x})-\partial_\nu u_0(s\mathbf{x})v(\mathbf{x})\right)\,d\sigma, 
		\end{align}
		where $\Lambda_h=S_h \cap \partial B_h$, $\Lambda_\varepsilon  =S_h \cap \partial B_\varepsilon$  and $\Gamma_{(\varepsilon,  h)}^\pm =\Gamma^\pm \cap (B_h \backslash B_\varepsilon )$.
		
		Taking limit as $\epsilon\rightarrow 0^+$, since $|u_0(s\mathbf x)|\leq1$, we have
		\begin{equation}\label{lem21}
		\lim_{\epsilon\rightarrow0}\int_{D_\epsilon}\Delta v(\mathbf x)u_0(s\mathbf{x})\,d\mathbf x=\int_{S_h}\Delta v(\mathbf x)u_0(s\mathbf{x})\,d\mathbf x.
		\end{equation}
Using a similar argument in the proof \cite[Theorem 4.1]{CDL2}	, with the help of  \cite[Theorem 4.18]{McLean}, one can show that $v=u_1^--u_2^-\in H^2(D_\epsilon )$. Therefore we can deduce that
		\begin{equation}\label{lem22}
		\lim_{\epsilon\rightarrow0}\int_{\Lambda_\epsilon}\left(\partial_\nu v(\mathbf x)u_0(s\mathbf{x})-\partial_\nu u_0(s\mathbf{x})v(\mathbf{x})\right)\,d\sigma=0.
		\end{equation}
	Moreover, since $|u_0(s\mathbf{x})|\leq1$ and $u_2^-\in H^1(S_h)$ which indicates that $u_2^-\in L^2(\Gamma_h^\pm)$, by the trace theorem, we have
	\begin{align}\label{b-lim}
	\lim_{\epsilon\rightarrow0}\int_{\Gamma^\pm_{(\epsilon,h)}}\left(\partial_\nu v(\mathbf{x})u_0(s\mathbf x)-\partial_\nu u_0(s\mathbf{x})v(\mathbf{x})\right)\,d\sigma&=\int_{\Gamma_h^\pm}\partial_\nu v(\mathbf{x})u_0(s\mathbf x)\,d\sigma\notag\\
	&=\int_{\Gamma_h^\pm}(\eta_1-\eta_2)u_2^-(\mathbf x)u_0(s\mathbf{x})\,d\sigma,
	\end{align}
	due to the fact that $v=0$ on $\Gamma_h^\pm$.
	Hence we can derive the following integral identity from \eqref{prop1-2} and \eqref{int-id}:
	\begin{align}\notag  
	\int_{S_h}u_0(s\mathbf x)\Delta v\,d\mathbf{x}&=\int_{S_h}\left(k^2(\omega_2-\omega_1)u_2^--k^2\omega_1v\right)u_0(s\mathbf{x})\,d\mathbf{x}\notag\\
	&=\int_{\Gamma_h^\pm}(\eta_1-\eta_2)u_2^-(\mathbf x)u_0(s\mathbf{x})\,d\sigma
	+\int_{\Lambda_h}\left(\partial_\nu v(\mathbf{x})u_0(s\mathbf x)-\partial_\nu u_0(s\mathbf{x})v(\mathbf{x})\right)\,d\sigma, \notag
	\end{align}
	which can be simplified as 
	\begin{align}\label{sim1}
	\int_{S_h}k^2(\omega_2-\omega_1)u_2^-u_0(s\mathbf{x})\,d\mathbf{x}&=\int_{S_h}k^2\omega_1vu_0(s\mathbf{x})\,d\mathbf{x}+\int_{\Gamma_h^\pm}(\eta_1-\eta_2)u_2^-(\mathbf x)u_0(s\mathbf{x})\,d\sigma\notag\\
	&+\int_{\Lambda_h}\left(\partial_\nu v(\mathbf{x})u_0(s\mathbf x)-\partial_\nu u_0(s\mathbf{x})v(\mathbf{x})\right)\,d\sigma,
	\end{align}
	by rearranging terms.
	
	Define 
	\begin{equation}\label{I1}
	I_1:=\int_{\Lambda_h}\left(\partial_\nu v(\mathbf x)u_0(s\mathbf{x})-\partial_\nu u_0(s\mathbf{x})v(\mathbf{x})\right)\,d\sigma.
	\end{equation}
	Since $v\in H^2(D_\epsilon )$, following a similar argument in \cite[Proof of Theorem 2.1]{CDL2}, utilizing Cauchy-Schwarz inequality and the trace theorem, we can show that
	\begin{align}\label{I11}
	|I_1|&\leq \lVert\partial_\nu v\rVert_{L^2(\Lambda_h)}\lVert u_0\rVert_{L^2(\Lambda_h)}+\lVert\partial_\nu u_0\rVert_{L^2(\Lambda_h)}\lVert v\rVert_{L^2(\Lambda_h)}\notag\\
	&\leq\lVert v\rVert_{H^2(S_h)}(\lVert u_0\rVert_{L^2(\Lambda_h)}+\lVert\partial_\nu u_0\rVert_{L^2(\Lambda_h)})\leq Ce^{-c'\sqrt{s}}
	\end{align}
	as $s\rightarrow\infty$, where $C, c'$ are two positive constants.
	

By \eqref{eq:43 calpha}, 	one has that $v\in C^{\alpha}(\overline{S_h})$ for $\alpha\in(0,1)$. Thus $v$ has the following expansion
	\begin{equation}\label{v-expan}
	v(\mathbf x)=v(\mathbf{0})+\delta v(\mathbf x),\quad |\delta v(\mathbf x)|\leq \lVert v\rVert_{C^{\alpha}}|\mathbf{x}|^{\alpha},
	\end{equation}
	where $v(\mathbf{0})=0$ since $v=0$ on $\Gamma_h^\pm$. Hence we can derive by \eqref{v-expan} that
	\begin{equation}\notag
	\int_{S_h}k^2\omega_1vu_0(s\mathbf{x})\,d\mathbf{x}=k^2\omega_1\int_{S_h}(v(\mathbf{0})+\delta v(\mathbf x))u_0(s\mathbf x)\,d\mathbf x=k^2\omega_1\int_{S_h}\delta v(\mathbf x)u_0(s\mathbf x)\,d\mathbf x.
	\end{equation}
	
	Define 
	\begin{equation}\label{I2-new}
	I_2:=\int_{S_h}\delta v(\mathbf x)u_0(s\mathbf x)\,d\mathbf x,
	\end{equation}
	By \eqref{eq:xalpha}, there holds
	\begin{align}\label{I21-new}
	|I_2|
	&\leq\int_{S_h}|\delta v(\mathbf x)||u_0(s\mathbf x)|\,d\mathbf x\leq\lVert v\rVert_{C^{\alpha}}\int_{W}|u_0(s\mathbf x)||\mathbf x|^{\alpha}\,d \mathbf x\notag\\
	&\leq \frac{2(\theta_M-\theta_m )\Gamma(2\alpha+4) }{ \delta_W^{2\alpha+4}} \lVert v\rVert_{C^{\alpha}}s^{-\alpha-2}.
	\end{align}

	Define 
	\begin{equation}\label{I3-new}
	I_3^\pm:=\int_{\Gamma_h^\pm}(\eta_1-\eta_2)u_2^-(\mathbf x)u_0(s\mathbf{x})\,d\sigma.
	\end{equation}
	From \eqref{eq:43 calpha}, since $u_2^-\in C^{\alpha}(\overline{S_h})$ for $\alpha\in(0,1)$, it has the following expansion
	\begin{equation}\label{u2-expan}
	u_2^-(\mathbf x)=u_2^-(\mathbf{0})+\delta u_2^-(\mathbf{x}),\quad |\delta u_2^-(\mathbf x)|\leq \lVert u_2^-\rVert_{C^{\alpha}}|\mathbf x|^{\alpha}.
	\end{equation}
	Substituting \eqref{u2-expan} into $I_3^+$, we know that
	\begin{equation}\label{I31-new}
	I_3^+=(\eta_1-\eta_2)\int_{\Gamma_h^+}\left(u_2^-(\mathbf{0})+\delta u_2^-(\mathbf{x})\right)u_0(s\mathbf x)\,d\sigma:=(\eta_1-\eta_2)u_2^-(\mathbf{0})I_{31}^++(\eta_1-\eta_2)I_{32}^+,	
	\end{equation}
	where
	\begin{equation}\label{I31}
	I_{31}^+=\int_{\Gamma_h^+}u_0(s\mathbf x)\,d\sigma,
	\end{equation}
	and 
	\begin{equation}\label{I32}
	I_{32}^+=\int_{\Gamma_h^+}\delta u_2^-(\mathbf{x})u_0(s\mathbf x)\,d\sigma.
	\end{equation}
	
	Denote
	\begin{equation}\notag
	\omega(\theta)=-\cos\left(\frac{\theta}{2}+\pi\right), \quad \mu(\theta)=-\cos\left(\frac{\theta}{2}+\pi\right)-\bsi \sin\left(\frac{\theta}{2}+\pi\right). 
	\end{equation} 
	It is clear that $\omega(\theta)>0$ for $\theta_m<\theta<\theta_M$.
	Then by the variable substitution $t=\sqrt{sr}$, we have
	\begin{equation}\label{I311}
	I_{31}^+=\int_{0}^{h}e^{-\sqrt{sr}\mu(\theta_M)}\,dr=2s^{-1}\left(\mu(\theta_M)^{-2}-\mu(\theta_M)^{-2}e^{-\sqrt{sh}\mu(\theta_M)}-\mu(\theta_M)^{-1}\sqrt{sh}e^{-\sqrt{sh}\mu(\theta_M)}\right).
	\end{equation}
	Using \eqref{u2-expan}, it is easy to see that
	\begin{equation}\label{I322}
	|I_{32}^+|\leq \lVert u_2^-\rVert_{C^{\alpha}}\int_{0}^{h}r^\alpha e^{-\sqrt{sr}\omega(\theta_M)}\,dr=\Oh(s^{-\alpha-1}),
	\end{equation}
	by direct calculations.
	
	Using the similar argument of $I_3^+$ for $I_3^-$, we can derive the following equation
	\begin{equation}\label{I32-new}
	I_3^-=(\eta_1-\eta_2)\int_{\Gamma_h^-}\left(u_2^-(\mathbf{0})+\delta u_2^-(\mathbf{x})\right)u_0(s\mathbf x)\,d\sigma:=(\eta_1-\eta_2)u_2^-(\mathbf{0})I_{31}^-+(\eta_1-\eta_2)I_{32}^-,	
	\end{equation}
	where
	\begin{equation}\label{I31-}
	I_{31}^-=\int_{0}^{h}e^{-\sqrt{sr}\mu(\theta_m)}\,dr=2s^{-1}\left(\mu(\theta_m)^{-2}-\mu(\theta_m)^{-2}e^{-\sqrt{sh}\mu(\theta_m)}-\mu(\theta_m)^{-1}\sqrt{sh}e^{-\sqrt{sh}\mu(\theta_m)}\right),
	\end{equation}
	and 
	\begin{equation}\label{I32-}
	|I_{32}^-|\leq\lVert u_2^-\rVert_{C^{\alpha}}\int_{0}^{h}r^\alpha e^{-\sqrt{sr}\omega(\theta_m)}\,dr=\Oh(s^{-\alpha-1}).
	\end{equation}
	
	Moreover, by \eqref{u2-expan}, we have
	\begin{align}\label{id1}
	&\int_{S_h}k^2(\omega_2-\omega_1)u_2^-u_0(s\mathbf x)\,d\mathbf{x}=k^2(\omega_2-\omega_1)\int_{S_h}\left(u_2^-(\mathbf{0})+\delta u_2^-(\mathbf{x}))\right)u_0(s\mathbf{x})\,d\mathbf{x}\notag\\
	&=k^2(\omega_2-\omega_1)u_2^-(\mathbf 0)\int_{S_h}u_0(s\mathbf x)\,d\mathbf{x}+k^2(\omega_2-\omega_1)\int_{S_h}\delta u_2^-(\mathbf{x})u_0(s\mathbf x)\,d\mathbf{x}\notag\\
	&=k^2(\omega_2-\omega_1)u_2^-(\mathbf 0)\int_{W}u_0(s\mathbf x)\,d\mathbf{x}-k^2(\omega_2-\omega_1)u_2^-(\mathbf 0)\int_{W\backslash B_h}u_0(s\mathbf x)\,d\mathbf{x}\notag\\
	&+k^2(\omega_2-\omega_1)\int_{S_h}\delta u_2^-(\mathbf{x})u_0(s\mathbf x)\,d\mathbf{x}.
	\end{align}
	Define 
	\begin{equation}\label{I4}
	I_4:=\int_{W\backslash B_h}u_0(s\mathbf x)\,d\mathbf{x}\quad\mbox{and}\quad 
	I_5:=\int_{S_h}\delta u_2^-(\mathbf{x})u_0(s\mathbf x)\,d\mathbf{x}.
	\end{equation}
	We know that $I_4$ fulfills \eqref{eq:1.5}, and similar to $I_2$, $I_5$ satisfies
	\begin{equation}\label{I5}
	|I_5|\leq \frac{2(\theta_M-\theta_m )\Gamma(2\alpha+4) }{ \delta_W^{2\alpha+4}} \lVert u_2^-\rVert_{C^{\alpha}}s^{-\alpha-2}\quad\mbox{for}\quad\alpha\in(0,1).
	\end{equation}
	
	Therefore, we can deduce from the integral identity \eqref{sim1} that
	\begin{equation}\label{sim2}
	k^2(\omega_2-\omega_1)u_2^-(\mathbf 0)\int_{W}u_0(s\mathbf x)\,d\mathbf{x}= I_1+k^2\omega_1 I_2+I_3^\pm+k^2(\omega_2-\omega_1)u_2^-(\mathbf{0})I_4+k^2(\omega_1-\omega_2)I_5.
	\end{equation}
	Substituting \eqref{eq:u0w}, \eqref{I31-new} and \eqref{I32-new} into \eqref{sim2}, after rearranging terms, we have
	\begin{align}\label{sim3}
	&2(\eta_2-\eta_1)u_2^-(\mathbf{0})\left(\mu(\theta_M)^{-2}+\mu(\theta_m)^{-2}\right)s^{-1}=6\bsi k^2(\omega_1-\omega_2)u_2^-(\mathbf{0})(e^{-2\theta_M\bsi}-e^{-2\theta_m\bsi})s^{-2}\notag\\
	&+2(\eta_2-\eta_1)u_2^-(\mathbf{0})s^{-1}\big(\mu(\theta_M)^{-2}e^{-\sqrt{sh}\mu(\theta_M)}+\mu(\theta_M)^{-1}\sqrt{sh}e^{-\sqrt{sh}\mu(\theta_M)}\notag\\
	&+\mu(\theta_m)^{-2}e^{-\sqrt{sh}\mu(\theta_m)}+\mu(\theta_m)^{-1}\sqrt{sh}e^{-\sqrt{sh}\mu(\theta_m)}\big)+I_1+k^2\omega_1 I_2+(\eta_1-\eta_2)I_{32}^+\notag\\
	&+(\eta_1-\eta_2)I_{32}^-+k^2(\omega_2-\omega_1)u_2^-(\mathbf{0})I_4+k^2(\omega_1-\omega_2)I_5.
	\end{align}
	
	Combining with \eqref{I11}, \eqref{I21-new}, \eqref{I322}, \eqref{I32-}, \eqref{eq:1.5} and \eqref{I5}, multiplying $s$ on the both sides of \eqref{sim3} and taking limit as $s\rightarrow\infty$, we can obtain that 
	\begin{equation}\label{final2}
	2(\eta_2-\eta_1)u_2^-(\mathbf{0})\left(\mu(\theta_M)^{-2}+\mu(\theta_m)^{-2}\right)=0.
	\end{equation}
	Since 
	\begin{equation}\notag
	\mu(\theta_M)^{-2}+\mu(\theta_m)^{-2}=\frac{(\cos\theta_M+\cos\theta_m)+\bsi(\sin\theta_M+\sin\theta_m)}{(\cos\theta_M+\bsi\sin\theta_M)(\cos\theta_m+\bsi\sin\theta_m)},
	\end{equation}
	and $-\pi<\theta_m<\theta_M<\pi$ with $\theta_M-\theta_m\neq\pi$, we can deduce that $\cos\theta_M+\cos\theta_m$ and $\sin\theta_M+\sin\theta_m$ can not be zero simultaneously. This implies that 
	\begin{equation}\notag
	\mu(\theta_M)^{-2}+\mu(\theta_m)^{-2}\neq0.
	\end{equation}
	Due to  \eqref{11}, we know that 
	\begin{equation}\label{eq:eta1eta2}
			\eta_1=\eta_2,
	\end{equation}
which completes the proof of Case (a).

	\medskip
	
	 	\noindent {\bf Part 2.}~In this part, we prove that $\omega_1=\omega_2$.  Substituting \eqref{eq:eta1eta2} into \eqref{sim3}, it yields that {\bl
		\begin{equation}\label{new-id}
		6\bsi k^2(\omega_2-\omega_1)u_1^-(\mathbf 0)(e^{-2\theta_M \bsi }-e^{-2\theta_m \bsi }  ) s^{-2}=I_1+k^2\omega_1I_2+k^2(\omega_2-\omega_1)u_2^-(\mathbf{0})I_4+k^2(\omega_1-\omega_2)I_5,
	\end{equation}  }
	where $I_1, I_2, I_3$ and $I_4$ fulfill \eqref{I11}, \eqref{I21-new}, \eqref{eq:1.5} and \eqref{I5}, respectively.
	
	Multiplying $s^2$ on the both sides of \eqref{new-id} and letting $s\rightarrow\infty$, we can prove that 
	\begin{equation}\label{final1}
			6\bsi k^2(\omega_2-\omega_1)u_1^-(\mathbf 0)(e^{-2\theta_M \bsi }-e^{-2\theta_m \bsi }  )=0.
	\end{equation}
	Since $-\pi<\theta_m<\theta_M<\pi$, it indicates that $e^{-2\theta_M \bsi }-e^{-2\theta_m \bsi }\neq 0$. From \eqref{11},  we can obtain directly that
	\begin{equation}\notag
		u_1^-(\mathbf{0})=u_2^- (\mathbf{0})\neq 0.
	\end{equation}
Therefore, by virtue of \eqref{final1}, we have  $\omega_1= \omega_2$.

\medskip
	\noindent {\bf Part 3.}~	Finally, we prove \eqref{eq:lem41 u1-}. Indeed, from \eqref{11}, combining with the facts that $\eta_1=\eta_2$ (Part 1) and the conductive boundary conditions in \eqref{eq:lemma41},  by the trace theroem one has
	\begin{equation}\label{eq:trans}
		u_1^-=u_1^+=u_2^+=u_2^-,\quad \partial_\nu u_1^+=\partial_\nu u_2^+,\quad \partial_\nu u_1^-=\partial_\nu u_2^-,\quad\mbox{on }\Gamma_h^\pm,
	\end{equation}	
	Therefore, since $\omega_1=\omega_2$ (Part 2.), by virtue of \eqref{eq:trans}, it is clear that
	\begin{equation}\label{main1-v}
		\begin{cases}
		\Delta v+k^2 \omega_2v=0&\mbox{in }S_h,\\
		v=\partial_\nu v=0&\mbox{on }\Gamma_h^\pm  ,
		\end{cases}
	\end{equation}
	where $v:=u_1^--u_2^-$. Using Holmgren's uniqueness principle (cf. \cite{CK}), we finish the proof of this lemma. 
	\end{proof}


Now, we are in a position to present the detailed  proofs of Theorem \ref{main1} and Theorem \ref{main2}.

\begin{proof}[Proof of Theorem \ref{main1}]
From the statement of Theorem \ref{main1},   the shape of $\Omega$ and the corresponding polygonal-cell partitions  $\Sigma_\ell$, $\ell=1,\ldots,N$, are known in advance. Then we prove this theorem by contradiction.  Suppose that $\lambda_{1}^*\neq \lambda_{2}^*$ or there exits an index $1\leq \ell_0\leq N$ such that 
$
q_{\ell_0,1} \neq q_{\ell_0,2} .
$  
From (3) in Definition \ref{def:2}, we know that   $\Sigma_{\ell_0}$ has a vertex $\mathbf{x}_c$ intersected by two adjacent edges $\Gamma^\pm$ of $\Omega$.
Since $-\Delta$ is invariant under rigid motion, without loss of generality, we assume that $\mathbf{x}_c=\mathbf{0}$ and for sufficiently small $h>0$,  there exists $B_h(\mathbf{0})$ centered at $\mathbf{0}$ with radius $h$, such that $S_h=\Omega\cap B_h(\mathbf{0})$ and $\Gamma_h^\pm=\partial\Omega\cap B_h(\mathbf{0})$, $\Gamma_h^\pm  \subset \partial {\mathbf G}$, where ${\mathbf G}=\mathbb {R}^2 \backslash \overline{ \Omega}$, $S_h$ and $\Gamma_h^\pm$ are the same as \eqref{nota2};
see Figure \ref{fig:main-proof} for the schematic illustration.
	
	\begin{figure}[htbp]
		\centering
		\includegraphics[width=0.5\linewidth]{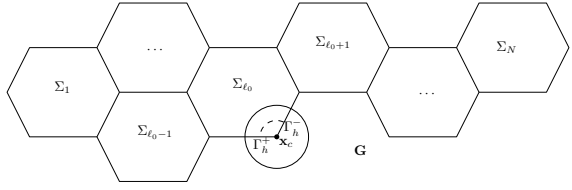}
		\caption{Schematic illustration of the cell geometry.}
		\label{fig:main-proof}
	\end{figure}

Let $u_1$ and $u_2$ be the total wave {\bl fields} associated with the admissible polygonal-cell conductive medium body  $(\Omega;q_j,\lambda_j )$, $j=1,2$.  Since $u_1^{\infty}(\hat{\mathbf{x}}; u^i)=u_2^\infty(\hat{\mathbf{x}}; u^i)$ for all $\hat{\mathbf{x}}\in\mathbb{S}^1$, by Rellich's Theorem (cf. \cite{CK}) and the unique continuation principle, we can derive that 
	\begin{equation}\label{main1-1}
		u_1^+=u_2^+\quad\mbox{in } B_h \backslash \overline{ S_h} \subset \mathbf{G}  ,
	\end{equation}
	and also
	\begin{equation}\label{main1-2}
	\begin{cases}
		u^+_1= u^-_1,\quad \partial_\nu u^+_1 + \lambda_{1}^* \ u^+_1=\partial_\nu u^- _1& \mbox{ on }\  \Gamma_h^\pm , \\[5pt]
u^+_2= u^-_2,\quad \partial_\nu u^+_2 + \lambda_{2}^* \ u^+_2=\partial_\nu u^- _2& \mbox{ on }\  \Gamma_h^\pm , 
\end{cases}
	\end{equation}
	by combining with the conductive transmission boundary conditions in \eqref{eq:model1}. Furthermore, it yields that
	\begin{equation}\label{eq:cell pde 436}
	\begin{cases}
		\Delta u^-_1+k^2 q_{\ell_0,1}  u^-_1=0 & \mbox{ in }\ S_h, \\[5pt] 
\Delta u^+_1 +k^2   u^+_1=0 & \mbox{ in }\ B_h \backslash \overline{ S_h}, \\[5pt] 
\Delta u^-_2+k^2 q_{\ell_0,2}  u^-_2=0 & \mbox{ in }\ S_h, \\[5pt] 
\Delta u^+_2 +k^2  u^+_2=0 & \mbox{ in }\ B_h \backslash \overline{ S_h}. 
	\end{cases}
	\end{equation}
 
 Recall that  $\Omega$ is an admissible polygonal-cell conductive medium body. Using the fact that $u^+_1,\, u_2^+ \in H^2( B_h \backslash \overline{ S_h} )$, one claims that $u^+_1(\mathbf{0})=u_2^+(\mathbf{0})\neq 0$.  In view of \eqref{main1-1}, \eqref{main1-2} and \eqref{eq:cell pde 436}, with the help of Lemma \ref{prop1}, we have 
 \[
 q_{\ell_0,1} = q_{\ell_0,2} \mbox{ and } \lambda_{1}^* = \lambda_{2}^*, 
 \]
 where we arrive at a contradiction. 
 
 The proof is complete. 
\end{proof}

\begin{proof}[Proof of Theorem \ref{main2}]
We prove this theorem by mathematical induction. 
From Theorem \ref{Th:3.1}, one has $\partial \Omega_1=\partial \Omega_2$,  which indicates that $\partial \Sigma_{1,1}=\partial \Sigma_{1,2}$. Using the similar argument in proving Theorem \ref{main1}, it can be shown that $q_{1,1}=q_{1,2}$ and $\lambda_{1,1}=\lambda_{1,2}$. Assume that there exits an index $n \in \mathbb{N} \backslash\{1\} $ such that 
\begin{equation}
	\partial \Sigma_{\ell,1}=\partial \Sigma_{\ell,2}, \quad q_{\ell,1}=q_{\ell,2},  \quad \lambda_{\ell,1}=\lambda_{\ell,2},\quad {\bl \ell=2,\ldots,n-1,  }
\end{equation}
where $\Sigma_{\ell,j}$ are  polygonal-nest partitions of $\Omega_j$,   
$j=1,2$. With the help of Lemma \ref{prop1}, hence we can recursively prove that
\begin{equation}\label{eq:u1l+1}
	u_{\ell,1}=u_{\ell,2} \mbox{ in } U_\ell=\Sigma_\ell\backslash \overline{\Sigma _{\ell+1} }, \quad \ell=1,2,\ldots, n-1
\end{equation}
by using $u_1^{\infty}(\hat{\mathbf{x}}; u^i)=u_2^\infty(\hat{\mathbf{x}}; u^i)$ and \eqref{eq:lem41 u1-}, where   $u_{\ell,1}=u_1\big |_{U_\ell }$ and  $u_{\ell,2}=u_2\big |_{U_\ell }$, with $u_1$ and $u_2$ being the total wave {fields} associated with the admissible polygonal-nest  conductive medium bodies  $(\Omega_j; q_j,\lambda_j)$, $j=1,2$. We divide the proof into two parts. 

\medskip

\noindent {\bf Part 1.}~We first show that $\partial \Sigma_{n,1}=\partial \Sigma_{n,2}$. From \eqref{eq:u1l+1}, using the similar argument in proving Theorem \ref{Th:3.1}, by Lemma \ref{Th:1.2}, one can prove $\partial \Sigma_{n,1}=\partial \Sigma_{n,2}$ directly by contradiction. 

\medskip

\noindent {\bf Part 2.}~We prove that
\begin{equation}\label{eq:449}
	\quad q_{n,1}=q_{n,2},  \quad \lambda_{n,1}=\lambda_{n,2}. 
\end{equation}
Consider a vertex $\mathbf{x}_c$ of $\Sigma_{n,1}$. Since $-\Delta$ is invariant under rigid motion, without loss of generality, we may assume that $\mathbf{x}_c=\mathbf 0$. Recall that  $u_1$ and $u_2$ are the total wave {\bl fields} associated with the admissible polygonal-nest conductive medium bodies   $(\Omega_j;q_j,\lambda_j )$, $j=1,2$, respectively. For sufficiently small $h$, suppose that  $S_h  \Subset U_{n,1}=\Sigma_{n,1} \backslash \overline{\Sigma_{n+1,1} } $;  see Figure \ref{fig:main2-proof} for the schematic illustration for instance.
	\begin{figure}[htbp]
		\centering
		\includegraphics[width=0.35\linewidth]{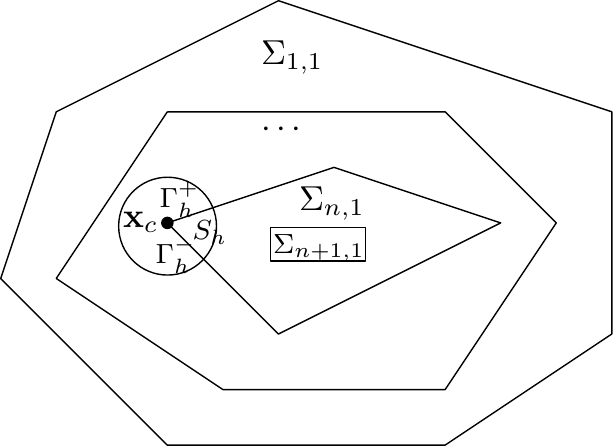}
		\caption{Schematic illustration of the nest geometry.}
		\label{fig:main2-proof}
	\end{figure}
Therefore, it yields that
\begin{equation}\label{eq:450}
\begin{cases}
\Delta u_{n,1}^-+k^2 q_{n,1} u_{n,1}^-=0 & \mbox{ in }\ S_h, \\[5pt] 
\Delta u_{n-1,1}^+ +k^2  q_{n-1,1} u_{n-1,1}^+=0 & \mbox{ in }\ B_h \backslash \overline{ S_h}, \\[5pt] 
\Delta u_{n,2}^-+k^2 q_{n,2} u_{n,2}^-=0 & \mbox{ in }\ S_h, \\[5pt] 
\Delta u_{n-1,2}^+ +k^2  q_{n-1,2} u_{n-1,2}^+=0 & \mbox{ in }\ B_h \backslash \overline{ S_h}, \\[5pt] 
u_{n,1}^-= u_{n-1,1}^+,\quad \partial_\nu u^+_{n-1,1} + \lambda_{n,1} u^+_{n-1,1}=\partial_\nu u^- _{n,1}& \mbox{ on }\  \Gamma_h^\pm , \\[5pt]
u^-_{n,2}= u^+_{n-1,2},\quad \partial_\nu u^+_{n-1,2} + \lambda_{n,2} u^+_{n-1,2}=\partial_\nu u^- _{n,2}& \mbox{ on }\  \Gamma_h^\pm .
\end{cases}
\end{equation}
 It is obvious that $u^+_{n-1,1}=u^+_{n-1,2}$ by the virtue of \eqref{eq:u1l+1}. In view of \eqref{eq:450}, with the help of Lemma \ref{prop1}, we can prove \eqref{eq:449}. 
 
 We can prove $N_1=N_2$ by using the contradiction. Indeed, if $N_1\neq  N_2$, without loss of generality, we assume that $N_1<N_2$. Therefore,  there exits a corner $\Sigma_{N_2+1,2}$ lying inside of  $\Sigma_{N_1,1}$. From Lemma \ref{Th:1.2}, we can prove that the total wave filed vanish at the vertex of this corner, which contradicts to the admissible condition of a polygonal-nest conductive medium body. 

The proof is complete. 
\end{proof}

\section{Concluding remarks} \label{sect:6}

As mentioned several times before, we would like to remark that the unique identifiability results in Theorems~\ref{main1} and \ref{main2} for the inverse problem \eqref{inverse11} associated with the scattering system \eqref{eq:model1} can be extended to the inverse problem \eqref{inverse22} associated with the scattering system \eqref{eq:model4}. 
In fact, it is easily seen from Section~\ref{sec3} that all the arguments in establishing the unique identifiability results are localized around a corner of the concerned conductive medium cell, and they are independent of the incident wave $u^i$ in \eqref{eq:model1}. The only place that we made use of the incident wave is in Definition~\ref{admissible} about the admissibility condition of the scattering medium body. Hence, as long as the admissibility condition in Definition~\ref{admissible} and the support of the source $\widetilde\psi$ stays a positive distance away from the conductive interface, then all the results derived in this paper for the inverse problem \eqref{inverse11} can be extended to the inverse problem \eqref{inverse22}. We would like to point out that one needs not to know $\widetilde\psi$ in advance, and it may be even located and supported away from the underlying conductive medium body. 

We would also like to remark that the arguments developed in this paper can be refined to establish certain unique identifiability results for more general conductive medium bodies than those considered in the present paper. However, it would be more interesting to consider the corresponding inverse problems in three dimensions associated with the Maxwell systems \eqref{eq:sca1} and \eqref{eq:model3}, which shall be a subject of our future study.

\section*{Acknowledgement}


The work of H Liu was supported by a startup fund from City University of Hong Kong and the Hong Kong RGC general research funds (projects 12302017, 12302018 and 12302919).


\begin{thebibliography}{99}





\bibitem{AR} G.~Alessandrini and L.~Rondi, {\it Determining a sound-soft polyhedral scatterer by a single far-field measurement}, Proc. Amer. Math. Soc., {\bf 35} (2005), 1685--1691.

\bibitem{Amm} H. Ammari, J. Chen, Z. Chen, J. Garnier and D. Volkov, {\it Target identification and characterization from electromagnetic induction data}, J. Math. Pures Appl., \textbf{101} (2014), 54--75. 

\bibitem{Ang} 
{T. S. ~Angell and A. ~Kirsch}, {\it The conductive boundary condition for Maxwells equations}, SIAM J. Appl. Math., {\bf 52(6)} (1992), 1597--1610.


\bibitem{Bsource}
E.~{Bl{\aa}sten}, {\it Nonradiating sources and transmission eigenfunctions vanish at corners and edges}, SIAM J. Math. Anal., {\bf 50(6)}, 6255–-6270. 
  
\bibitem{BLLW} E. Bl{\aa}sten, X. Li, H. Liu and Y. Wang, {On vanishing and localization near cusps of transmission eigenfunctions: a numerical study}, Inverse Problems, {\bf 33} (2017), 105001. 
%
\bibitem{BL2016}
E. Bl{\aa}sten and H. Liu,  {\it On corners scattering stably, nearly non-scattering interrogating waves, and stable shape determination by a single far-field pattern},
Indiana Univ. Math. J., in press, 2020. 
  
\bibitem{BL2017}
E. Bl{\aa}sten and H. Liu, {\it Recovering piecewise constant refractive indices by a single far-field pattern}, arXiv:1705.00815.

%
\bibitem{BL2017b} 
E. Bl{\aa}sten and H. Liu, {\it On vanishing near corners of transmission eigenfunctions}, J. Funct. Anal., {\bf 273} (2017), no. 11, 3616--3632. Addendum, arXiv:1710.08089
%

%
%

\bibitem{BonT}
O. Bondarenko, {\it The Factorization Method for Conducting Transmission Conditions}, M.Sc. Dissertation, KIT, 2016. 

\bibitem{Bon}
O. Bondarenko and X. Liu,
{\it The factorization method for inverse obstacle scattering with conductive boundary condition},
 Inverse Problems, {\bf 29(9)} (2013), 095021. 


\bibitem{CN}
T. Chaumont-Frelet and S. Nicaise, {\it High-frequency behaviour of corner singularities in Helmholtz problems}, ESAIM: Math. Model. Numer. Anal., {\bf (52)(5)} 2018, 1803--1845. 

\bibitem{CY}
J.~Cheng and M.~Yamamoto, {\it Uniqueness in an inverse scattering problem within non-trapping polygonal obstacles with at most two incoming waves},
Inverse Problems, {\bf 19} (2003), 1361--1384.

\bibitem{CK}
D.~Colton and R.~Kress, {\it Inverse Acoustic and Electromagnetic Scattering Theory}, 2nd edition, Springer-Verlag, Berlin, 1998.


\bibitem{Cos}
M. Costabel and M. Dauge, {\it Construction of corner singularities for Agmon-Douglis-Nirenberg elliptic systems}, Math. Nachr., {\bf 162} (1993),  209--237.





\bibitem{Dauge88}
M. Dauge, {\it Elliptic Boundary Value Problems in Corner Domains-Smoothness and Asymptotics of Solutions},  Lecture Notes in Mathematics, Vol. 1341. Springer-Verlag, Berlin 1988.

\bibitem{DLL1} Y. Deng, J. Li, and H. Liu, {\it On identifying magnetized anomalies using geomagnetic monitoring}, Arch.
Ration. Mech. Anal., \textbf{231} (2019), no. 1, 153--187.

\bibitem{DLL2} Y. Deng, J. Li, and H. Liu, {\it On identifying magnetized anomalies using geomagnetic monitoring within
a magnetohydrodynamic model}, Arch. Ration. Mech. Anal., \textbf{235} (2020), no. 1, 691--721.

\bibitem{DLU} Y. Deng, H. Liu, and G. Uhlmann, {\it On an inverse boundary problem arising in brain imaging}, J. Differential
Equations, \textbf{267} (2019), no. 4, 2471--2502.

\bibitem{CDL2}
H.~Diao, X.~Cao and H.~Liu, {\it On the geometric structures of conductive transmission eigenfunctions and their application},	arXiv:1811.01663, 2018. 

\bibitem{Grisvard}

P. Grisvard, {\it Boundary Value Problems in Non-Smooth Domains}, Pitman, London 1985.

\bibitem{HM}
R. F. Harrington and J. R. Mautz, {\it An impedance sheet approximation for thin dielectric shells}, IEEE Trans. Antennas and Propagation, {\bf AP-23} (1975), pp. 531--534.


%
%
%
%




%
%
%
%




%
%
%


%
%
  
\bibitem{LLM1} J. Li, H. Liu, and S. Ma, {\it Determining a random Schr\"odinger equation with unknown source and potential}, SIAM J. Math. Anal., \textbf{51} (2019), no. 4, 3465--3491.

\bibitem{LLM2} J. Li, H. Liu, and S. Ma, Determining a random Schr\"odinger operator: both potential and source are
random, arXiv:1906.01240 (2019).


\bibitem{Liua} H.~Liu, {\it A global uniqueness for formally determined inverse electromagnetic obstacle scattering}, Inverse Problems, {\bf 24} (2008), 035018 (13pp).


\bibitem{LPRX} H.~Liu, M.~Petrini, L.~Rondi and J.~Xiao, {\it Stable determination of sound-hard polyhedral scatterers by a minimal number of scattering measurements}, {J. Differential Equations}, {\bf 262} (2017), no. 3, 1631--1670.

\bibitem{LRX} {H. Liu, L. Rondi and J. Xiao}, {\it Mosco convergence for $H(\mathrm{curl})$ spaces, higher integrability for Maxwell's equations, and stability in direct and inverse EM scattering problems}, J. Eur. Math. Soc. (JEMS), \textbf{21} (2019), no. 10, 2945--2993. 

\bibitem{LU1} H. Liu and G. Uhlmann, {\it Determining both sound speed and internal source in thermo-and photo-acoustic
tomography}, Inverse Problems, \textbf{31} (2015), no. 10, 105005.

\bibitem{Liu-Zou}
H.~Liu and J.~Zou,
{\it Uniqueness in an inverse acoustic obstacle scattering problem for both sound-hard and sound-soft polyhedral scatterers},
Inverse Problems, {\bf 22} (2006), 515--524.

\bibitem{Liu-Zou3}
H.~Liu and J.~Zou,
{\it On unique determination of partially coated polyhedral scatterers
with far field measurements},
Inverse Problems, {\bf 23} (2007), 297--308.


\bibitem{McLean}
W.~McLean, {\it  Strongly Elliptic Systems and Boundary Integral Equations},  Cambridge University Press,   Cambridge, 2010. 
%

\bibitem{Senior}
T. B. A. Senior, {\it A note on the impedance boundary condition}, Canad. J. Phys., {\bf 40} (1962), pp. 663--665.




%
%
%
%
%
%
%
\bibitem{Ron2} L.~Rondi, {\it Stable determination of sound-soft polyhedral scatterers by a single measurement}, Indiana Univ. Math. J., {\bf 57} (2008), 1377--1408.
%
%
%
%
%
%
%
%
%




\end{thebibliography}
\end{document}